\newcommand{\pathtotrunk}{./}
\tikzstyle{shaded}=[fill=red!10!blue!20!gray!50!white]
\tikzstyle{shaded line}=[double=red!10!blue!20!gray!50!white, double distance=2mm, draw=black]
\tikzstyle{unshaded}=[fill=white]
\tikzstyle{unshaded line}=[double=white, double distance=2mm, draw=black]
\tikzstyle{Tbox}=[circle, draw, thick, fill=white, opaque,]
\tikzstyle{empty box}=[circle, draw, thick, fill=white, opaque, inner sep=2mm]
\tikzstyle{background rectangle}= [fill=red!10!blue!20!gray!40!white,rounded corners=2mm] 
\tikzstyle{on}=[very thick, red!50!blue!50!black]
\tikzstyle{off}=[gray]
\tikzstyle{traces}=[scale=.2, inner sep=1mm,baseline]
\tikzstyle{quadratic}=[scale=.23, inner sep=.5mm, baseline]
\tikzstyle{annular}=[scale=.6, inner sep=1mm, baseline]
\tikzstyle{make triple edge size}= [scale=.4, inner sep=1mm,baseline] 
\tikzstyle{icosahedron network}=[scale=.35, inner sep=1mm, baseline]
\tikzstyle{ATLsix}=[scale=.25, baseline]
\tikzstyle{TL12}=[scale=.15,baseline=-0.5ex]
\tikzstyle{TLEG}=[scale=.5,baseline]
\tikzstyle{PAdefn}=[scale=.7,baseline]
\tikzstyle{STrain}=[baseline=0,scale=2]
\title{Non-cyclotomic fusion categories}
\author{Scott~Morrison}
\address{
}%
\email{scott@tqft.net}
\author{Noah~Snyder}
\address{
}%
\email{nsnyder@math.columbia.edu}
\address{%
\rm URLs:\stdspace \tt \url{http://tqft.net/}  \rm and \tt \url{http://math.columbia.edu/~nsnyder}
%\\ \rm Email:  \tt scott@tqft.net \rm and \tt nsnyder@math.columbia.edu
}
\date{
  First edition: the mysterious future
  This edition: \today.
}
\keywords{
  Fusion categories, cyclotomic fields, subfactors, counterexamples
}
\tikzstyle{STrain}=[baseline=0,scale=2]
\newcommand{\drawS}[3]{%
	\filldraw[fill=white,thick] (#1,#2) ellipse (3mm and 3mm);
	\node at (#1,#2) {\Large $S$};
	\path(#1,#2) ++(#3:0.37) node {$\star$};
}
\newcommand{\RainbowOne}{
	\fill[shaded] (-0.8,0) -- (-0.8,0.6) arc (180:0:0.8) -- (0.8,0) -- (0.2,0) -- (0.2,1) -- (-0.2,1) -- (-0.2,0);
	\draw (-0.8,0) -- (-0.8,0.6) arc (180:0:0.8) -- (0.8,0);
	\draw (-0.2,0) -- (-0.2,1);
	\draw (0.2,0) -- (0.2,1);
%	\node at (-0.3,0.5) {$1$};
	\node at (0.45,0.5) {\footnotesize$2n-1$};
	\drawS{0}{1}{-90}
}
\newcommand{\RainbowTwo}{
	\draw (0,0) -- (0,1);
	\node at (0.15,0.5) {\footnotesize$2n$};
	\drawS{0}{1}{180}
	%\draw (-1,0) -- (-1,0.5) arc (180:0:1) -- (1,0);
	%\draw (-0.9,0) -- (-0.9,0.5) arc (180:0:0.9) -- (0.9,0);
	\filldraw[shaded] (-1,0) -- (-1,0.5) arc (180:0:1) -- (1,0)  -- (0.9,0) -- (0.9,0.5) arc (0:180:0.9) -- (-0.9,0);
}
\newcommand{\JWPlusTwo}{%
	\filldraw[fill=white,thick] (-1,-0.2) rectangle (1,0.2);
	\node at (0,0) {\Large$\JW{2n+2}$};
}
\newcommand{\JWPlusFour}{%
	\filldraw[fill=white,thick] (-1.2,-0.2) rectangle (1.2,0.2);
	\node at (0,0) {\Large$\JW{2n+4}$};
}
\newcommand{\STrainOneOne}{%
	\foreach \x in {-1,0} {
		\node[anchor=south] at (\x+0.5,1) {\footnotesize$n-1$};
		\draw (\x,1) -- (\x + 1, 1);
	}
	\foreach \x in {-1,0,1} {
		\drawS{\x}{1}{90}
	}
}
\newcommand{\STrainStrings}[2]{%
	\fill[shaded] (-0.5,0) rectangle (0.5,1);
	\draw (-0.5,1) -- (-0.5,0);
	\node[anchor=west] at (-0.5,0.5) {\footnotesize#1};
	\draw (0.5,1) -- (0.5,0);
	\node[anchor=west] at (0.5,0.5) {\footnotesize#2};
}
\newcommand{\STrainOne}{%
	\node[anchor=south] at (0,1) {\footnotesize$n-1$};
	\draw (-0.5,1) -- (0.5, 1);
	\foreach \x in {-0.5,0.5} {
		\drawS{\x}{1}{90}
	}
}
\newcommand{\STrainThreeStrings}[3]{%
	\fill[shaded] (-1,1) rectangle (1,0);
	\foreach \x in {-1,0,1} {
		\draw (\x,0) -- (\x,1);
	}
        \node [anchor=west] at (-1,0.5) {\footnotesize#1};
        \node [anchor=west] at (0,0.5) {\footnotesize#2};
	\node [anchor=west] at (1,0.5) {\footnotesize#3};
}
\begin{document}

\begin{abstract}
Etingof, Nikshych and Ostrik ask in \cite[\S 2]{MR2183279}  if every fusion category can be completely defined over a cyclotomic field.  We show that this is not the case: in particular one of the fusion categories coming from the Haagerup subfactor \cite{MR1686551} and one coming from the newly constructed extended Haagerup subfactor \cite{0909.4099} can not be completely defined over a cyclotomic field.    On the other hand, we show that the Drinfel'd center of the even part of the Haagerup subfactor is completely defined over a cyclotomic field.  We identify the minimal field of definition for each of these fusion categories, compute the Galois groups, and identify their Galois conjugates.
\end{abstract}

\maketitle

% remove table of contents for submitted version
% \setcounter{tocdepth}{1}
% \tableofcontents

%\section{Introduction}
%\input{text/intro.tex}

\section{Introduction}
In \cite[\S 2]{MR2183279}, Etingof, Nikshych and Ostrik ask if every fusion category over the complex numbers can be defined over a cyclotomic field.   More precisely, does every fusion category over $\Complex$ have a complete rational form over a cyclotomic field?  (See Section \ref{sec:background} for definitions and examples of the key notions ``rational form" and ``complete rational form.")

Their question is motivated by the following results.
\begin{itemize}
\item The representation category of any finite group has a complete rational form over a cyclotomic field. (This is a classical result of Brauer's, see \cite[\S 12.3]{MR0450380}.)
\item The semisimplified representation category of any quantum group at a root of unity has a complete rational form over a cyclotomic field.  (This follows from the usual construction of Weyl modules.)
\item The Frobenius-Perron dimension of any object in a fusion category is a cyclotomic integer \cite{MR2183279}.
\item The global dimension of a fusion category is a cyclotomic integer \cite{MR2183279}.
\item The entries of the $S$-matrix of a modular category lie in a cyclotomic field \cite{MR1266785, MR1120140}.
\end{itemize}

We answer this question in the negative.

\begin{thm} \label{thm:noncyclotomic}
The principal even part of the Haagerup subfactor and the principal even part of the extended Haagerup subfactor are fusion categories which do not have a complete rational form over any cyclotomic field.
\end{thm}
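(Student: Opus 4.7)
The plan is to exhibit, for each of the two categories, a gauge-invariant scalar whose minimal polynomial over $\mathbb{Q}$ has a non-abelian Galois group. Since by Kronecker--Weber every abelian extension of $\mathbb{Q}$ sits inside a cyclotomic field, such a scalar cannot belong to any cyclotomic field, and this will rule out a complete rational form over one. The first step is the standard reduction: if $\mathcal{C}$ has a complete rational form over $K$, then any scalar built from the structural data (associators, pivotal structure, evaluations and coevaluations) that is invariant under changes of basis in the hom-spaces must in fact lie in $K$.

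The second step is to identify and compute such an invariant. The obvious candidates are the eigenvalues (or traces, determinants, characteristic polynomials) of an $F$-matrix $F^{XYZ}_W$, since the similarity class of $F^{XYZ}_W$ is a gauge invariant whenever the relevant hom-spaces have dimension greater than one. The Haagerup even part contains a simple object $\rho$ with $\rho \otimes \rho$ decomposing with multiplicity, and explicit formulas for the relevant $F$-matrices are available from Izumi's subfactor reconstruction and from the Asaeda--Haagerup data. I would compute a canonical gauge-invariant scalar from this data and show that it satisfies an irreducible polynomial over $\mathbb{Q}$ whose Galois group is (for instance) $S_3$ or another non-abelian group.

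For the extended Haagerup even part the same scheme applies, but the associator data must be bootstrapped from the planar-algebraic construction of \cite{0909.4099}, and the arithmetic of the resulting invariant is considerably more delicate. Beyond the calculations themselves, the main technical obstacle is demonstrating in closed form that the invariant generates a non-abelian extension: this requires exhibiting an explicit minimal polynomial and computing its Galois group symbolically, not merely checking numerically that the scalar fails to be a sum of roots of unity. Once that is done, Kronecker--Weber closes the argument for both categories, and the refined information about the Galois group furnishes, as a byproduct, the minimal field of definition and the list of Galois conjugates promised in the abstract.
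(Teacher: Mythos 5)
Your overall strategy is the paper's strategy: identify a canonical, gauge-invariant scalar of the fusion category, show it generates an extension of $\Rational$ that is not Galois, and conclude by observing that any subfield of a cyclotomic field is a subfield of an abelian Galois extension of $\Rational$ and hence is itself Galois over $\Rational$. (Only this easy direction is needed; Kronecker--Weber, which you cite, is the converse and plays no role.)

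The gap is in the invariant you propose. The similarity class of an $F$-matrix $F^{XYZ}_W$ is \emph{not} a gauge invariant. Under a change of basis $g^{ab}_c \in GL\bigl(\Hom{}{V_a\tensor V_b}{V_c}\bigr)$, the matrix $F^{XYZ}_W$ is multiplied by independent invertible block-diagonal matrices on its source $\bigoplus_e \Hom{}{X\tensor Y}{V_e}\tensor\Hom{}{V_e\tensor Z}{W}$ and on its target $\bigoplus_f \Hom{}{Y\tensor Z}{V_f}\tensor\Hom{}{X\tensor V_f}{W}$; these are different vector spaces, so the action is not conjugation, and ``eigenvalues'' are not even defined without an auxiliary identification of source and target. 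Only closed-up combinations of $F$-symbols---equivalently, scalars obtained by evaluating closed diagrams---are gauge invariants. Moreover the fusion rules of the Haagerup and extended Haagerup principal even parts are multiplicity-free, so there is in any case no multiplicity space of dimension $>1$ to exploit in the way you describe.

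The paper replaces this with a concrete closed-diagram scalar: the normalized third twisted moment $M_3(R)$, where $R=\tfrac12(P-Q)$ and $P,Q$ are the two simple idempotents just past the branch of the principal graph. Since $R$ is a $\Rational$-linear combination of idempotents onto simple objects, it exists in any complete (hence split) rational form; and $M_3(R)$ is a ratio of traces of closed diagrams built from three copies of $R$ together with cups, caps, and the essentially unique trivalent vertex, so it is manifestly independent of all choices and must lie in the ground field of any complete rational form. A short computation using the quadratic relation satisfied by the one-click rotation of the generator (from the planar-algebra presentations of the Haagerup and extended Haagerup subfactors) gives $M_3(R)=\lambda_\ell\cdot c$ with $c\in\Rational(D_\ell)^\times$, hence $\lambda_\ell\in k$. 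The arithmetic then closes as you intended: $\Rational(\lambda_0)$ has Galois closure with dihedral Galois group of order $8$, and $\Rational(\lambda_1)$ has Galois closure with group $\Integer/2\Integer\wr\Integer/3\Integer$; in particular neither $\Rational(\lambda_\ell)$ is Galois over $\Rational$, so neither embeds in any cyclotomic field.
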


We hope that this result will eventually allow a more robust technique for establishing the ``exotic" nature of these fusion categories.  No construction that preserves cyclotomicity can produce these fusion categories starting from groups or quantum groups.  Alternately this result might suggest new techniques for constructing these fusion categories.

Let $\cH_0$ and $\cH_1$ be the Haagerup \cite{MR1686551, 0902.1294} and extended Haagerup \cite{0909.4099} subfactors which are the unique subfactors with the following principal graph pairs.
\begin{align*}
\Gamma(\cH_0) & = \left \{ \mathfig{0.25}{haagerup}, \mathfig{0.25}{dual-haagerup} \right \} \\
\Gamma(\cH_1) & = \left \{ \mathfig{0.4}{EH}, \mathfig{0.4}{dual-EH} \right \}
\end{align*}

Given a subfactor $A \subset B$ there are two tensor categories $\cS^p$ and $\cS^d$ (consisting of certain $A$-$A$ bimodules and certain $B$-$B$ bimodules respectively) called the principal even part and dual even part.  If the subfactor is finite depth then $\cS^p$ and $\cS^d$ are fusion categories over $\Complex$.  We will be looking at the fusion categories $\cH_\ell^p$ and $\cH_\ell^d$ for $\ell \in \{0,1\}$.

Denote by $D_0 = \frac{5+\sqrt{13}}{2} \simeq 4.30278$ and $D_1 = \frac{8}{3}+\frac{2}{3} \operatorname{Re} \sqrt[3]{\frac{13}{2} \left(-5-3 i \sqrt{3}\right)} \simeq 4.3772$ the Jones indices of $\cH_0$ and $\cH_1$.  Fix 
\begin{align*}
\lambda_0 & = i \sqrt{\frac{-1+\sqrt{13}}{6}} \simeq 0.658983 i \\
\intertext{and}
\lambda_1& = \sqrt{-\frac{1}{5}+2 \operatorname{Re} \sqrt[3]{\frac{117-  65 i \sqrt{3}}{2250}}} \simeq 0.648585 i.
\end{align*}
%, where we take the cube root in the upper half plane
 Let $\zeta_m$ denote the primitive $m$th root of unity $\exp(2\pi i/m)$.  Note that
\begin{align*}
D_0 & = 2 -\zeta_{13}^{2} -\zeta_{13}^5 -\zeta_{13}^6 -\zeta_{13}^7-\zeta_{13}^8-\zeta_{13}^{11}\\
D_1 & = 3+ \zeta_{13}^2  + \zeta_{13}^3 + \zeta_{13}^{10} +  \zeta_{13}^{11}
\end{align*}
so $D_\ell \in \Rational(\zeta_{13})$ while $\lambda_\ell$ is not cyclotomic.  In fact, $\Rational(\lambda_\ell)$ is not Galois.   The Galois group of the Galois closure of $\Rational(\lambda_0)$ is the dihedral group of order $8$, and the Galois group of the Galois closure of $\Rational(\lambda_1)$ is $\Integer/2\Integer \wr \Integer/3\Integer \cong \Integer/2\Integer \times A_4$.

\begin{thm} \label{thm:detailed}
The following statements hold for $\ell = 0,1$.
\begin{enumerate}
\item \label{pa:construct} The even parts $\cH_\ell^p$ and $\cH_\ell^d$ each have a (possibly incomplete) rational form over $\Rational(D_\ell)$.
%\noah{It is very hard to make this sentence grammatical.  The old version made it sound like you could only define *both* of them when you had $D_\ell$ but maybe you could define one or the other.}
\item \label{pa:dualprojs} The dual even part $\cH_\ell^d$ has a complete rational form over $k$ if and only if $D_\ell \in k$.
\item \label{pa:principalprojs} The principal even part $\cH_\ell^p$ has a complete rational form over $k$ if and only if $\lambda_\ell \in k$.
\item \label{pa:center} The Drinfel'd center $Z(\cH_0^p) \cong Z(\cH_0^d)$ has a complete rational form over $k$ as a ribbon fusion category if and only if $\zeta_{39} \in k$.
\end{enumerate}
\end{thm}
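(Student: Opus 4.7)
The overall strategy is to combine explicit planar-algebra constructions for the existence (``if'') directions with Galois-theoretic and dimensional obstructions for the necessity (``only if'') directions. The key fact on which all necessity arguments rest is that for any fusion category with a complete rational form over $k$, every intrinsically defined numerical invariant---Frobenius--Perron dimensions, categorical traces, and, in the ribbon case, the entries of the modular data $(S,T)$---must lie in $k$.

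For Part~\ref{pa:construct}, I would use that each $\cH_\ell$ arises from a planar algebra whose generators and relations can be presented over $\Rational(D_\ell)$; cutting down to the even parts, the minimal central projections and intertwiner bases inherit entries in $\Rational(D_\ell)$, yielding the desired (possibly incomplete) rational form. For Part~\ref{pa:dualprojs}, the ``if'' direction amounts to upgrading the Part~\ref{pa:construct} form for $\cH_\ell^d$ to a complete one by producing splitting projections for every simple summand of every tensor product with entries still in $\Rational(D_\ell)$. For ``only if'', I would exhibit a simple object in $\cH_\ell^d$ whose Frobenius--Perron dimension already generates $\Rational(D_\ell)$; any complete rational form over $k$ must contain this dimension.

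Part~\ref{pa:principalprojs} is the most delicate, since the FP-dimensions of simples in $\cH_\ell^p$ lie in the totally real subfield of $\Rational(D_\ell)$ and therefore cannot detect $\lambda_\ell$. The plan is to isolate a specific tensor product $X \otimes X$ in $\cH_\ell^p$ with a simple summand whose splitting projection is forced, in any basis defined over $k$, to have entries involving $\lambda_\ell$ as a Galois-equivariant scalar---for instance, as a ratio of an explicit categorical trace to a product of quantum dimensions. The ``if'' direction then follows from Part~\ref{pa:construct} after adjoining $\lambda_\ell$, while ``only if'' follows because no $k$-linear change of basis alters such an invariant. The main obstacle is precisely this step: manufacturing an invariant whose value is $\lambda_\ell$ rather than its Galois conjugate, and doing so in a basis-independent way. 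That this is possible at all hinges on the fact that $\Rational(\lambda_\ell)$ is not Galois over $\Rational$, so the Galois conjugate form is genuinely inequivalent to $\cH_\ell^p$ and the conjugate scalar cannot be realized by a $k$-rational change of basis.

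For Part~\ref{pa:center}, I would use Izumi's tube-algebra description of $Z(\cH_0)$ to compute the modular data $(S,T)$ explicitly and check that the field generated by all its entries is exactly $\Rational(\zeta_{39}) = \Rational(\zeta_3,\zeta_{13})$. Since a complete rational form as a \emph{ribbon} fusion category must contain every entry of $S$ and $T$, this gives ``only if''. For ``if'', observe that $D_0 \in \Rational(\zeta_{13}) \subset \Rational(\zeta_{39})$, so Part~\ref{pa:dualprojs} produces a complete rational form of $\cH_0^d$ over $\Rational(\zeta_{39})$, and the center construction descends, with the extra factor of $\zeta_3$ accommodating the braiding and twist. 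The principal labor here is the explicit modular-data computation; no conceptual obstruction remains once the tube algebra is in hand.
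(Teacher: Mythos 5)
Your plans for Part~\ref{pa:construct} and the existence direction of Part~\ref{pa:dualprojs} match the paper: present the planar algebras $\cH_\ell$ by generators and relations over $\Rational(D_\ell)$ (the paper uses a \emph{lopsided} normalization precisely so that the cup/cap scalars stay in $\Rational(D_\ell)$ rather than $\Rational(\sqrt{D_\ell})$), then write down explicit formulas for the minimal idempotents and observe that those on the dual graph have $\Rational(D_\ell)$ coefficients while $P,Q$ on the principal graph require $\Rational(\lambda_\ell)$. For the necessity direction of Part~\ref{pa:dualprojs}, one small caution: the paper does not invoke the Frobenius--Perron dimension (which is defined arithmetically from the fusion ring, not intrinsically from the $k$-linear structure) but rather the squared norm $|d(X)|^2$ of the categorical dimension, which by \cite[\S 2.1]{MR2183279} is a well-defined element of $k$ independent of any choice of pivotal structure; the conclusion is then $\Rational(D_\ell)=\Rational(|D_\ell|^2)\subset k$.

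The genuine gap is in Part~\ref{pa:principalprojs}, which you yourself flag as ``the main obstacle.'' You correctly diagnose that every dimension-type invariant of $\cH_\ell^p$ already lies in $\Rational(D_\ell)$, so something finer is needed, and you guess it should be ``a ratio of an explicit categorical trace to a product of quantum dimensions''; but no such scalar is actually produced, and that construction is the central technical contribution of the paper. The paper's answer: set $Y=\JW 2$, $m=2\ell+2$, $R=\tfrac12(P-Q)\in\End{Y^{\otimes m}}$ (this makes sense in any split form because $P,Q$ are simple summands of $Y^{\otimes m}$), choose basis vectors $B,B'$ of $\Hom{}{Y^{\otimes 2}}{1},\Hom{}{1}{Y^{\otimes 2}}$ and $T,T'$ of $\Hom{}{Y^{\otimes 3}}{1},\Hom{}{1}{Y^{\otimes 3}}$, form a closed diagram $\widehat M_3(R;B,B',T,T')$ with three $R$'s threaded through one trivalent vertex $T$, one $T'$, and cups and caps, and normalize by $\Theta(T,T')\,C(B,B')^{m-1}$. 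Homogeneity in $(B,B',T,T')$ shows this normalized third twisted moment $M_3(R)$ is independent of all choices, hence lies in $k$ for any split rational form. Computing it in $\cH_\ell^p(\Complex)$ with the obvious planar choices reduces it to $\frac{1}{[2]^2\lambda_\ell^3}\operatorname{tr}(\rho^{1/2}(S)^3)$, which the quadratic relation $\rho^{1/2}(S)^2=(\check r-1)\rho^{1/2}(S)+\check r\,\JW n$ evaluates to $\lambda_\ell$ times a unit of $\Rational(D_\ell)$. This is what forces $\lambda_\ell\in k$. Your appeal to the non-Galois nature of $\Rational(\lambda_\ell)$ is tangential: what matters is the existence of a basis-independent scalar equal to $\lambda_\ell$, not the structure of the Galois closure.

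There is a second gap in Part~\ref{pa:center}. Your only-if argument via the $T$-matrix is exactly the paper's. But for the ``if'' direction, ``the center construction descends'' is not enough. The paper proves (via a lemma on dual categories of module categories, phrased with internal homs and the induction functor) that the center of a rational form is a rational form of the center; but it emphatically need \emph{not} be complete, and the paper's own example of $G$-graded vector spaces shows completeness can fail under taking centers. So the paper must then directly verify that $Z(\cH_0^p(\Rational(\zeta_{39})))$ is split, which it does by decomposing $I(\JW 2)$ using the spectral projections $\frac{1}{39}\sum_i \zeta_{39}^{mi}\theta^i$ of the ribbon element (these have $\Rational(\zeta_{39})$ coefficients since the $T$-eigenvalues are $39$th roots of unity), thereby splitting off each $\mu_i,\sigma_1,\sigma_2$; then separating $\pi_1$ from $\pi_2\oplus\sigma_0$ using an explicit $S$-matrix coefficient; and finally splitting $\pi_2$ from $\sigma_0$ by comparing multiplicities in $I(1)$. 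None of this appears in your proposal, and it is not dispensable.
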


Theorem \ref{thm:noncyclotomic} follows immediately from part \ref{pa:principalprojs} of Theorem \ref{thm:detailed}.  Part 4 is of interest because it means these results do not exclude the possibility that every braided fusion category is defined over a cyclotomic field.  We prove part \ref{pa:construct} and the ``if" direction of parts \ref{pa:dualprojs} and \ref{pa:principalprojs} in Section \ref{sec:construction}.  We prove the ``only if" direction of parts \ref{pa:dualprojs} and \ref{pa:principalprojs} in Section \ref{sec:noncyclotomic}.  We prove part \ref{pa:center} in Section \ref{sec:center}.

The main technique in this paper is to show that, in the context of fusion categories associated to 3-supertransitive subfactors, the (correctly normalized) ``twisted moments" of any $\Rational$-linear combination of projections gives an element of the base field of any complete rational form.  These twisted moments can be computed using techniques from Jones's preprint \cite{quadratic}.  In the construction of the Haagerup subfactor by Peters \cite{0902.1294}, the moments and twisted moments of the ``generator" are the only scalars needed to define the subfactor.  The $\lambda_\ell$ above are $\Rational(D_\ell)$ multiples of the third twisted moments of $\frac{1}{2}\left(\id_P-\id_Q\right)$ (where $P$ and $Q$ are the two simple objects immediately after the branch).

There is a third subfactor with index in the interval $(2, 3+\sqrt{3})$ called the Asaeda-Haagerup subfactor \cite{MR1686551}.  Our techniques do not give an obstruction to cyclotomicity for either of the fusion categories coming from the Asaeda-Haagerup subfactor, because the analogous moments and twisted moments are cyclotomic.  However, since there is not yet a construction of the Asaeda-Haagerup planar algebra following the Jones-Peters approach \cite{quadratic, 0902.1294, 0909.4099}, the lack of obstruction does not guarantee that the even parts of the Asaeda-Haagerup subfactor are cyclotomic.

The authors would like to thank Emily Peters for teaching us about the Haagerup planar algebra and Pavel Etingof for encouraging us to write this paper.  We'd like to thank Victor Ostrik, Ben Webster, and Pasquale Zito for suggesting arguments which we used to improve Section \ref{sec:center}.  (For Pasquale Zito's suggestions, see Math Overflow \url{http://mathoverflow.net/questions/17641/}.)  In addition we would also like to thank Stephen Bigelow, Vaughan Jones, Dmitri Nikshych, and Dylan Thurston for helpful conversations.  Scott Morrison was at the Miller Institute for Basic Research at UC Berkeley during this work, and Noah Snyder was supported by an NSF Postdoctoral Fellowship at Columbia University.

\section{Background}
\label{sec:background}
%!TEX root = ../article.tex

\subsection{Fusion categories and fields of definition}
Let $k$ be a field.  An object in an additive category is called simple if it has no non-trivial proper subobjects.  An additive category is called semisimple if every object is a direct sum of simple objects (and in particular, every indecomposable object is simple).  A category is called idempotent complete (or Karoubian or psuedoabelian) if every idempotent has an image (that is, a subobject which the idempotent factors through).  It is easy to see that any idempotent complete additive semisimple category is abelian.  A \emph{split semisimple category over $k$} is a semisimple category over $k$ such that every simple object $X$ is split simple, that is $\End{X} = k$.  If $k$ is an algebraically closed field, then any semisimple category over $k$ is automatically split.

A \emph{fusion category over $k$} is a $k$-linear abelian semisimple rigid monoidal category with finitely many isomorphism classes of simple objects.  A \emph{split fusion category over $k$} is a fusion category over $k$ which is split semisimple.  (Warning, some authors require that all fusion categories be split.)

\begin{example}
Consider $\Real[\Integer/3 \Integer]\text{-mod}$.   This is a fusion category  over $\Real$ with two objects: the trivial module and the $2$-dimensional representation (where the generator acts by $120$-degree rotation).  It is not split fusion because the endomorphism algebra of the $2$-dimensional representation is $\Complex$.
\end{example}

Suppose that $k<K$ is an inclusion of fields and that $\cC_K$ is a semisimple abelian category over $K$ possibly with some fixed additional structures (e.g. it's a monoidal rigid category or a braided category).  A \emph{rational form} of $\cC$ over $k$ is a semisimple abelian category $\cC_k$ over $k$ (again with the same structure) together with an equivalence between the idempotent completion of $\cC_k \otimes_k K$ is and $\cC_K$ (this equivalence should preserve the additional structure).  A \emph{complete rational form} of $\cC$ over $k$ is a rational form $\cC_k$ such that $\cC_k \otimes_k K \cong \cC_K$, in other words, a complete rational form is a rational form such that $\cC_k \otimes_k K$ is already idempotent complete.

%Suppose that $k<K$ is an inclusion of fields and suppose that $\cC_K$ is a semisimple abelian tensor category over $K$.  A \emph{rational form} of $\cC$ over $k$ is a fusion category $\cC_k$ over $k$ such that the idempotent additive completion of $\cC_k \otimes_k K$ is equivalent to $\cC_K$.  A \emph{complete rational form} of $\cC$ over $k$ is a fusion category $\cC_k$ such that $\cC_k \otimes_k K \cong \cC_K$, in other words, a complete rational form is a rational form such that $\cC_k \otimes_k K$ is already idempotent complete.

There are several reasons for considering incomplete rational forms.  First, many constructions do not preserve completeness.  For example, the category of $G$-graded rational vector spaces is a complete rational form of the category of $G$-graded complex vector spaces, while the center of the former category is only an incomplete rational form of the center of the latter.  Second, any fusion category is the category of representations of a weak Hopf algebra $A$ \cite{MR1976459, 0206113, 9904073, MR2522429} and if that weak Hopf algebra has an rational form $A_k$ then $A_k-\text{mod}$ is, in general, an incomplete rational form for $A-\text{mod}$.  Third, the notion of incomplete rational form arises naturally in the context of planar algebras.  Finally, it can be convenient to show that $\cC_k$ is a complete rational form by first showing that it is a rational form (which is often easy for the above reasons) and then explicitly checking completeness.

Notice that if $\cC_K$ is a split fusion category over $K$ and $\cC_k$ is a rational form over $k < K$ then $\cC_k$ is complete if and only if $\cC_k$ is a split fusion category.

%A \emph{split rational form} is a rational form $\cC_k$ which is a split.  We say that $\cC_K$ is \emph{defined over} $k$ if there exists a split rational form $\cC_k$.  We will be interested in the case $K = \Complex$ and $k$ a number field.  An important equivalent condition to $\cC_k$ being split is that base extension give an isomorphism on Grothendieck groups $K_0(\cC_k) \cong K_0(\cC_K)$.  In particular, any rational form for $\cC_K$ categorifies the same fusion ring.

If $\cC_K$ is a split fusion category then there is a more concrete description of having a complete rational form (pointed out to us by Victor Ostrik).  A split fusion category $\cC_K$ can be completely described by a collection of vector spaces $\Hom{}{V_a \otimes V_b}{V_c}$ over $K$, and associativity maps between the appropriate tensor products of these spaces.   The fusion category $\cC_K$ has a complete rational form over $k$ if and only if there exists a basis for each of these vector spaces such that all the associativity maps are given by matrices with entries in $k$.

\begin{example} \label{ex:graded}
$\cC_\Complex = \Complex[\Integer/3 \Integer]\text{-mod}$ has a complete rational form over $\Rational$.  To see this notice that $\Integer/3\Integer$ is abelian, and hence $\Complex[\Integer/3 \Integer]\text{-mod}$ is equivalent to the category of $\Integer/3 \Integer$-graded complex vector spaces.  We can take $\cC_\Rational$ to be the category of $\Integer/3 \Integer$-graded rational vector spaces.
\end{example}

\begin{example} \label{ex:incomplete}
Consider $G$ a finite group.  Notice that all of the representations of a group $G$ are defined over a field $k$ if and only if $k[G]\text{-mod}$ is split.  All representations of any finite group $G$ are defined over $\Rational(\zeta_{n})$, where $n$ is the exponent of $G$.  Therefore, $\Complex[G]\text{-mod}$ has a complete rational form over a cyclotomic field.  On the other hand as Example \ref{ex:graded} shows, it is possible for $\Complex[G]\text{-mod}$ to be defined over a smaller field than the minimal field of definition for all its representations.
\end{example}

If $\cC$ is a $k$-linear category, and $k < K$ is an inclusion of fields and the idempotent additive completion of $\cC \tensor_k K$ is semisimple, then $\cC$ is semisimple.  This follows from the characterization of semisimplicity in terms of semisimplicity of the endomorphism algebras, given in the next Lemma, and the equivalence of semisimplicity of artinian rings with the absence of nilpotent ideals \cite[p. 203]{MR1009787}.  The same is not true for split semisimplicity as  $\Real[\Integer/3 \Integer]\text{-mod}$ is not split semisimple while  $\Complex[\Integer/3 \Integer]\text{-mod}$ is.  The converse is also false at least when $k<K$ is inseparable \cite[Question 5.1]{MR1995781}.

\begin{lem}
A $k$-linear additive idempotent complete category $\cC$ is semisimple if and only if the endomorphism algebra of every object is a semisimple algebra.  The simple objects are exactly those objects whose endomorphism algebras are division rings.
\end{lem}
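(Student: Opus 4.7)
The plan is to pass back and forth between idempotents in $\End{X}$ and direct summands of $X$ (available since $\cC$ is idempotent complete), combining this with the Wedderburn structure of finite-dimensional semisimple algebras and Schur's lemma.

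For the forward implication I would proceed as follows. Suppose $\cC$ is semisimple and write $X \cong \bigoplus_i S_i^{n_i}$ with the $S_i$ pairwise non-isomorphic simples. Since a semisimple idempotent-complete additive category is automatically abelian (as the text notes), Schur's lemma applies in its usual form: a non-zero endomorphism of a simple has zero kernel and full image, hence is an isomorphism, so $\End{S_i}$ is a division ring, and $\Hom{}{S_i}{S_j} = 0$ for $i \neq j$. The resulting block decomposition gives $\End{X} \cong \prod_i M_{n_i}(\End{S_i})$, a finite product of matrix algebras over division rings, hence semisimple.

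For the reverse implication I would take an arbitrary $X$ and use the Wedderburn decomposition of $\End{X}$ to write $1 = e_1 + \cdots + e_r$ as an orthogonal sum of primitive idempotents. Idempotent completeness converts this into a direct sum $X \cong \bigoplus_i Y_i$ with $\End{Y_i} \cong e_i \End{X} e_i$; since each $e_i$ is primitive in a semisimple algebra, this corner is a division ring. It then suffices to show that any $Y$ whose endomorphism algebra is a division ring is simple, which also supplies the $(\Leftarrow)$ direction of the second clause of the lemma.

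This last step is where I expect the main obstacle to lie: without a priori abelianness, a subobject of $Y$ need not split as a direct summand, so the absence of non-trivial idempotents in a division ring does not immediately preclude non-split proper subobjects. My plan is to leverage the decomposition machinery already in hand: any object of $\cC$ decomposes into indecomposable summands with division ring endomorphism algebras, so a putative subobject of $Y$ decomposes analogously, and composing the inclusion with a summand-projection produces an endomorphism of $Y$ that must be either zero or invertible, pinning the subobject as either $0$ or $Y$. The other direction of the simple-object characterization (simple implies division ring) is then an immediate consequence of the Schur argument in the now-established semisimple abelian setting.
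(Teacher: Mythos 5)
Your forward direction and the Wedderburn-decomposition step match the paper's argument. The gap is exactly where you predicted it, and your proposed fix doesn't close it. Given a monomorphism $i\colon Z \to Y$ with $\End{Y}$ a division ring, decomposing $Z = \bigoplus_j Z_j$ gives you split monos $Z_j \to Z$ and hence monos $i_j \colon Z_j \to Y$, but there is no ``summand-projection'' available in the direction $Y \to Z$ or $Y \to Z_j$: that would require $Z$ (or $Z_j$) to already be a direct summand of $Y$, which is precisely what is at issue. So ``composing the inclusion with a summand-projection'' does not in fact produce an endomorphism of $Y$, and the argument as stated does not go through.

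The ingredient you are missing, and which the paper's proof supplies, is a Hom-vanishing statement: if $X$ and $Y$ are non-isomorphic objects whose endomorphism algebras are division rings, then $\Hom{}{X}{Y} = 0$. This is extracted from the hypothesis that $\End{X \oplus Y}$ is a semisimple algebra. In $\End{X \oplus Y}$ the diagonal idempotents $e_X$, $e_Y$ are primitive (their corner algebras $e_X E e_X = \End{X}$ and $e_Y E e_Y = \End{Y}$ are division rings), and in a semisimple ring two primitive idempotents $e$, $f$ satisfy $eEf \neq 0$ only when $e$ and $f$ are equivalent, which here would produce mutually inverse maps $X \to Y$ and $Y \to X$. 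Once you have this, your plan works: each $i_j$ is nonzero (as $i$ is mono and $Z_j \to Z$ is split mono), hence $Z_j \cong Y$ and $i_j$ is an isomorphism; if there were two summands $Z_1$, $Z_2$, then $\bigl(\id_{Z_1},\, -i_2^{-1}i_1\bigr)\colon Z_1 \to Z_1 \oplus Z_2$ would be a nonzero map killed by $i$, contradicting monicity; so $Z = Z_1$ and $i$ is an isomorphism. You should either prove the Hom-vanishing lemma explicitly or cite it, since without it the ``simple $\Leftrightarrow$ division ring'' equivalence (and hence semisimplicity of $\cC$) is not established.
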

\begin{proof}
By Schur's lemma, if $\cC$ is semisimple then the endomorphism algebra of an object is a sum of matrix algebras over division rings, which is a semisimple algebra.

In the other direction, suppose that $\cC$ has all endomorphism algebras semisimple.  Since the category is idempotent complete, one may use the projections in the endomorphism algebras to decompose any object into a direct sum of objects whose endomorphism algebras are division rings.  If $X$ and $Y$ are non-isomorphic objects whose endomorphism algebras are division rings, the semisimplicity of $\End{X\oplus Y}$ shows that there are no nonzero maps between $X$ and $Y$.  Hence all objects whose endomorphism algebras are division rings are simple and every object is semisimple.
\end{proof}

%If $k<\Complex$ and $\cC$ is a fusion category over $\Complex$ then a \emph{rational form} of $\cC$ over $k$ is a fusion category $\cC_k$ over $k$ such that $\cC_k \otimes_k \Complex \cong \cC$ and such that the Grothendieck group $K_0(\cC_k) = K_0(\cC)$ (or equivalently such that every simple object in $\cC_k$ is absolutely simple). The latter condition is necessary in order for this concept to match up with field of definition of representations of algebras, for example even though the representation theory of $\Integer/3\Integer$ is not defined over $\Real$ nonetheless $\Real[\Integer/3\Integer]\text{-mod} \otimes_\Real \Complex \cong \Complex[\Integer/3\Integer]\text{-mod}$. \scott{Clunky sentence} In particular with our terminology $\Complex[\Integer/3\Integer]\text{-mod}$ does not have a rational form over $\Real$.  If $\cC$ has a rational form over $k$ we say that $\cC$ is \emph{defined over} $k$.

\subsection{Subfactors and category theory}

The main examples in this paper come from the theory of subfactors.  A subfactor is an inclusion $A \subset B$ of von Neumann algebras with trivial centers.  The applications of subfactor theory to tensor categories involve subfactors of finite index.

From such a subfactor $A \subset B$, we can construct a pair of $\Complex$-linear abelian semisimple rigid tensor categories (that is, categories that only fail to be fusion categories by possibly having infinitely many non-isomorphic simple objects).  See \cite{MR1424954} for more details. These tensor categories are  the subcategory of the category of $A\text{-}A$ bimodules tensor generated by ${}_A B_A$ and the subcategory of $B\text{-}B$ bimodules tensor generated by ${}_B B_A \otimes_A {}_A B_B$.  These tensor categories are fusion if and only if the subfactor is finite depth.

It will not be important to this paper to understand subfactors or their bimodules. Although our main examples come from subfactors, these subfactors can be constructed via their planar algebras, and we can define the two associated tensor categories directly from the planar algebra.

\subsection{Shaded planar algebras}

One of the main tools for understanding monoidal categories (and more generally $2$-categories) is the diagram calculus (pioneered by Penrose \cite{MR0281657}, Joyal-Street \cite{MR1113284}, Reshetikhin-Turaev \cite{MR1036112}, etc.) of string diagrams.  Planar algebras are a version of this diagram calculus which allows you to study monoidal categories from the point of view of a particular object, where you only consider strings labelled by that object.

In particular, given a subfactor the resulting string diagrams have regions which are checkerboard shaded (corresponding to $A$ and $B$) and unoriented unlabelled strings (depending on the shading this string represents either ${}_A B_B$ or ${}_B B_A$).  The structure here is a called a ``shaded planar algebra."

We sketch the definition here.  For further details, see  \cite[\S 2]{MR1865703}, \cite[\S 0]{math.QA/9909027}, or \cite{MR1957084}.
%(The original definition is phrased in terms of colored operads, but essentially the same).

\begin{defn}
A {\em (shaded) planar tangle} 
has an outer disk,
a finite number of inner disks,
and a finite number of non-intersecting strings.
A string can be either a closed loop
or an edge with endpoints on boundary circles.
We require that
there be an even number of endpoints on each boundary circle,
and a checkerboard shading of
the regions in the complement of the interior disks.
We further require that there be a marked point on the boundary of each disk, and that the inner disks are ordered.

Two planar tangles are considered equal if they are isotopic (not necessarily rel boundary).

Here is an example of a planar tangle.
$$%
%\beginpgfgraphicnamed{\pathtotrunk diagrams/tikz/#1-external}%
\begin{tikzpicture}[scale=.65]
	\clip (0,0) circle (3cm);
	
	\begin{scope}[shift=(10:1cm)]	
		\draw[shaded] (0,0)--(0:6cm)--(90:6cm)--(0,0);	
		\draw[shaded] (0,0) .. controls ++(180:2cm) and ++(-90:2cm) .. (0,0);
	\end{scope}
	
	\draw[shaded] (-150:1cm) -- (120:4cm) -- (180:4cm) -- (-150:1cm);
	\draw[shaded] (-150:1cm) -- (-120:4cm) -- (-60:4cm) -- (-150:1cm);
	
	\begin{scope}[shift=(10:1cm)]	
		\node at (0,0) [Tbox, inner sep=1mm] {\small{\textcolor{gray}{2}}};
		\node at (90:1.5cm) [Tbox, inner sep=1mm] {\small{\textcolor{gray}{1}}};
		\node at (-45:.7cm) {$\star$};
		\node at (120:1.6cm) {$\star$};
	\end{scope}
	\node at (-150:1cm) [Tbox, inner sep=1 mm] {\small{\textcolor{gray}{3}}};
	\node at (-120:1.6cm) {$\star$};
	\node at (-30:2.7cm) {$\star$};
	
	\draw[very thick] (0,0) circle (3cm);
\end{tikzpicture}%
%\endpgfgraphicnamed
$$

Planar tangles can be composed by placing one planar tangle inside an interior disk of another,
lining up the marked points,
and connecting endpoints of strands.
The numbers of endpoints and the shadings must match up appropriately.  This composition turns the collection of planar tangles into a colored operad.
\end{defn}

\begin{defn}
A {\em (shaded) planar algebra} over a field $k$ consists of
\begin{itemize}
\item A family of vector spaces $\{V_{(n,\pm,)}\}_{n\in \Natural}$ over $k$, called the positive and negative $n$-box spaces.
\item For each planar tangle, a multilinear map $V_{n_1, \pm_1} \otimes \ldots \otimes V_{n_j, \pm_j} \rightarrow V_{n_0, \pm_0}$ where $n_i$ is half the number of endpoints on the $i$th interior boundary circle, $n_0$ is half the number of endpoints on the outer boundary circle, and the signs $\pm$ are positive (respectively negative) when the marked point on the corresponding boundary circle is in an unshaded (respectively shaded) region.
\end{itemize}

For example, the planar tangle above gives a map $$V_{1,+} \otimes V_{2,+} \otimes V_{2,-} \rightarrow V_{3,+}.$$

The linear map associated to a `radial' tangle (with one inner disc, radial strings, and matching marked points) must be the identity.
We require that the action of planar tangles be compatible with composition of planar tangles.  In other words, composition of planar tangles must correspond to the obvious composition of multilinear maps.  \end{defn}

We will refer to an element of $V_{n,\pm}$ (and specifically $V_{n,+}$, unless otherwise stated) as an ``$n$-box.''

We make frequent use of three families of planar tangles
called multiplication, trace, and tensor product, which are shown in Figure \ref{fig:timestracetensor}.
``Multiplication'' gives an associative product
$V_{n,\pm} \otimes V_{n,\pm} \rightarrow V_{n,\pm}$.
``Trace'' gives a map $V_{n,\pm} \rightarrow V_{0,\pm}$.
``Tensor product'' gives an associative product
$V_{m,\pm} \otimes V_{n,\pm} \rightarrow V_{m+n,\pm}$ if $m$ is even,
or $V_{m,\pm} \otimes V_{n,\mp} \rightarrow V_{m+n,\pm}$ if $m$ is odd.

\begin{figure}[ht]
$$%
%\beginpgfgraphicnamed{\pathtotrunk diagrams/tikz/#1-external}%
 \begin{tikzpicture}[PAdefn]
	\clip [draw] (2,2) arc (0:180:2cm) -- (-2,-2) arc (-180:0:2cm) -- (2,2);
	
	%first draw the lines
	\draw (0,2) .. controls ++(-150:1.5cm) and ++(150:1.5cm) .. (0,-2) .. controls ++(110:1.5cm) and ++(-110:1.5cm) .. (0,2);

	\draw (0,2) .. controls ++(-30:1.5cm) and ++(30:1.5cm) .. (0,-2);
	
	\draw (0,2) -- +(110:3cm) -- +(130:3cm) -- (0,2);
	\draw (0,2) -- ++(50:3cm);
		
	\draw (0,-2) -- ++(-50:3cm);
	\draw (0,-2) -- +(-110:3cm) -- +(-130:3cm) -- (0,-2);
	
	%decorate with dots, labels and stars
	\node at (.4,0) {$\ldots$};
	\node at (.2,3.2) {$\dots$};
	\node at (.2,-3.2) {$\dots$};

	\node at (0,2) [Tbox,inner sep=1.4mm] (A) {\small{\textcolor{gray}{1}}};
	\node at (0,-2) [Tbox,inner sep=1.4mm] (B) {\small{\textcolor{gray}{2}}};
	\node at (A.180) [left] {$\star$};
	\node at (B.180) [left] {$\star$};
	\node at (-1.5,2.5)  {$\star$};	
	
	%redraw boundary
	\draw[ultra thick] (2,2) arc (0:180:2cm) -- (-2,-2) arc (-180:0:2cm) -- (2,2);
\end{tikzpicture}%
%\endpgfgraphicnamed
 \quad , \quad %
%\beginpgfgraphicnamed{\pathtotrunk diagrams/tikz/#1-external}%
\begin{tikzpicture}[scale=.25,baseline]
	\clip (8,6) arc (0:180:6cm) -- (-4,-6) arc (-180:0:6cm) -- (8,6);

%	\filldraw[shaded] (0,0) .. controls ++(-157:3cm) and ++(157:3cm) .. (0,0) .. controls ++(112:2cm) and ++(-112:2cm) .. (0,0);
%	\filldraw[shaded] (0,0) .. controls ++(-22:3cm) and ++(22:3cm) .. (0,0) .. controls ++(67:2cm) and ++(-67:2cm) .. (0,0);
%	\draw (0,0) .. controls ++(22:3cm) and ++(90:4cm) .. (3,0) .. controls ++(-90:4cm) and ++(-22:3cm) .. 
	\draw (0,0) .. controls ++(-67:6cm) and ++(-90:5cm) .. (3,0) .. controls ++(90:5cm) and ++(67:6cm) .. (0,0);
	\draw (0,0) .. controls ++(130:4cm) and ++(90:15cm) .. (5,0) .. controls ++(-90:15cm) and ++(-130:4cm) .. (0,0) .. controls ++(-157:7cm) and ++(-90:20cm) .. (6,0) .. controls ++(90:20cm) and ++(157:7cm) .. (0,0);

	\node at (0,0) [Tbox,inner sep=2mm] (T1) {};
	\node at (T1.180) [left] {$\star$};
	\node at (4.1,0) {$\cdots$};
	
	\draw[ultra thick] (8,6) arc (0:180:6cm) -- (-4,-6) arc (-180:0:6cm) -- (8,6);
\end{tikzpicture}%
%\endpgfgraphicnamed
 \quad , \quad %
%\beginpgfgraphicnamed{\pathtotrunk diagrams/tikz/#1-external}%
 \begin{tikzpicture}[PAdefn]
	\clip [draw] (2,2) arc (90:-90:2cm) -- (-2,-2) arc (-90:-270:2cm) -- (2,2);
	
	%first draw the lines	
	\draw (2,0) -- +(110:3cm) -- +(130:3cm) -- (2,0);
	\draw (2,0) -- ++(50:3cm);
	\draw (2,0) -- ++(-50:3cm);
	\draw (2,0) -- +(-110:3cm) -- +(-130:3cm) -- (2,0);
		
	\draw (-2,0) -- +(110:3cm) -- +(130:3cm) -- (-2,0);
	\draw (-2,0) -- ++(50:3cm);
	\draw (-2,0) -- ++(-50:3cm);
	\draw (-2,0) -- +(-110:3cm) -- +(-130:3cm) -- (-2,0);
	
	%decorate with dots, labels and stars
	\node at (-1.8,1.2) {$\dots$};
	\node at (-1.8,-1.2) {$\dots$};
	\node at (2.2,1.2) {$\dots$};
	\node at (2.2,-1.2) {$\dots$};

	\node at (2,0) [Tbox,inner sep=1.4mm] (A) {\small{\textcolor{gray}{2}}};
	\node at (-2,0) [Tbox,inner sep=1.4mm] (B) {\small{\textcolor{gray}{1}}};
	\node at (A.180) [left] {$\star$};
	\node at (B.180) [left] {$\star$};
	\node at (-3.7,0)  {$\star$};	
	
	%redraw boundary
	\draw[ultra thick] (2,2) arc (90:-90:2cm) -- (-2,-2) arc (-90:-270:2cm) -- (2,2);
\end{tikzpicture}%
%\endpgfgraphicnamed
$$
\caption
  {The multiplication, trace, and tensor product tangles.}
\label{fig:timestracetensor}
\end{figure}
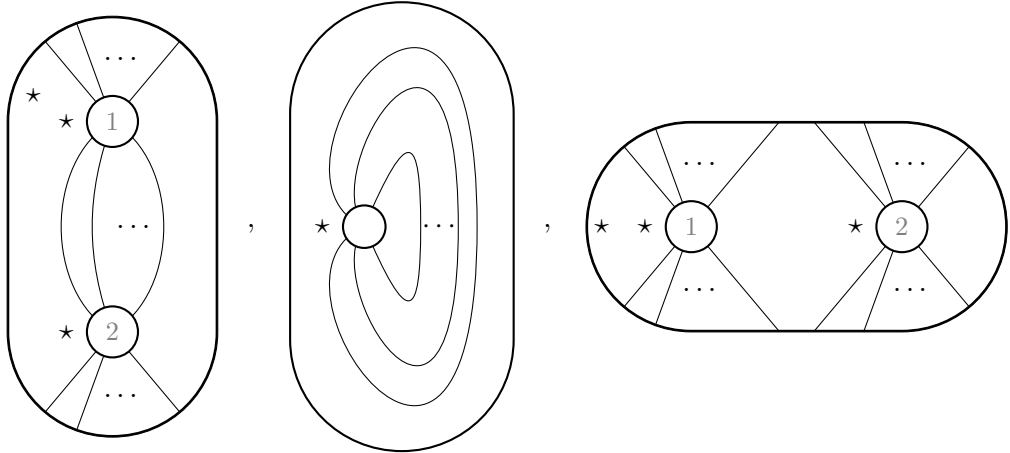

The (shaded or unshaded) empty diagrams can be thought of as elements of $V_{0,\pm}$, since the `empty tangle' induces a map from the empty tensor product $\Complex $ to the space $V_{0,\pm}$.  If the space $V_{0,\pm}$ is one dimensional
then we can identify it with $\Complex$  by sending the empty diagram to one.

%\begin{defn}
%A planar algebra is called simple if $\dim V_{0,\pm} = 1$.
%\end{defn}

\begin{defn}
A planar algebra is called \emph{irreducible} if $\dim V_{0,\pm} = 1$ and $\dim V_{1,\pm} = 1$.
\end{defn}

\begin{defn}\label{defn:uncappable}
An $n$-box $S$ is {\em uncappable}  if $\epsilon_i(S)=0$ for all $i=1,\cdots, 2n$ where
 $$\epsilon_1 = %
%\beginpgfgraphicnamed{\pathtotrunk diagrams/tikz/#1-external}%
\begin{tikzpicture}[annular]
	\clip (0,0) circle (2cm);

	\filldraw[shaded] (0,0) .. controls ++(170:2cm) and ++(100:2cm) .. (0,0);
	\filldraw[shaded] (-158:4cm)--(0,0)--(-112:4cm);

	\draw[shaded] (68:4cm)--(0,0)--(-68:4cm)--(0:10cm);
	
	\draw[ultra thick] (0,0) circle (2cm);
	
	\node at (0,0)  [empty box] (T) {};
	\node at (T.180) [left] {$\star$};
	\node at (180:2cm) [right] {$\star$};
	\node at (0:1cm) {$\cdot$};
	\node at (20:1cm) {$\cdot$};
	\node at (-20:1cm) {$\cdot$};
	
\end{tikzpicture}%
%\endpgfgraphicnamed
 \, , \qquad 
 \epsilon_2 = %
%\beginpgfgraphicnamed{\pathtotrunk diagrams/tikz/#1-external}%
\begin{tikzpicture}[annular]
	\clip (0,0) circle (2cm);

	\filldraw[shaded] (158:4cm) -- (0,0) .. controls ++(130:2cm) and ++(50:2cm) .. (0,0)--(-68:4cm) arc (-68:158:4cm);
	\filldraw[shaded] (-158:4cm)--(0,0)--(-112:4cm);

	\draw[ultra thick] (0,0) circle (2cm);
	
	\node at (0,0)  [empty box] (T) {};
	\node at (T.180) [left] {$\star$};
	\node at (180:2cm) [right] {$\star$};
	\node at (0:1cm) {$\cdot$};
	\node at (20:1cm) {$\cdot$};
	\node at (-20:1cm) {$\cdot$};
	
\end{tikzpicture}%
%\endpgfgraphicnamed
 \, , \quad \ldots , \qquad 
 \epsilon_{2n}= %
%\beginpgfgraphicnamed{\pathtotrunk diagrams/tikz/#1-external}%
\begin{tikzpicture}[annular]
	\clip (0,0) circle (2cm);

	\filldraw[shaded] (112:4cm) -- (0,0) .. controls ++(140:2cm) and ++(-140:2cm) .. (0,0)--(-112:4cm) arc (-112:-202:4cm);
	
	\draw[shaded] (68:4cm)--(0,0)--(-68:4cm)--(0:10cm);
	
	\draw[ultra thick] (0,0) circle (2cm);
	
	\node at (0,0)  [empty box] (T) {};
	\node at (T.180) [left] {$\star$};
	\node at (90:2cm) [below] {$\star$};
	\node at (0:1cm) {$\cdot$};
	\node at (20:1cm) {$\cdot$};
	\node at (-20:1cm) {$\cdot$};
	
\end{tikzpicture}%
%\endpgfgraphicnamed
 \, . $$
 
 We say $S$ is a {\em rotational eigenvector with eigenvalue $\omega$} if   $\rho(S)= \omega S$ where
  $$\rho=%
%\beginpgfgraphicnamed{\pathtotrunk diagrams/tikz/#1-external}%
\begin{tikzpicture}[annular]
	\clip (0,0) circle (2cm);

	\filldraw[shaded] (158:4cm)--(0,0)--(112:4cm);
	\filldraw[shaded] (-158:4cm)--(0,0)--(-112:4cm);
	
	\draw[shaded] (68:4cm)--(0,0)--(-68:4cm)--(0:10cm);
	
	\draw[ultra thick] (0,0) circle (2cm);
	
	\node at (0,0)  [empty box] (T) {};
	\node at (T.180) [left] {$\star$};
	\node at (-90:2cm) [above] {$\star$};
	\node at (0:1cm) {$\cdot$};
	\node at (20:1cm) {$\cdot$};
	\node at (-20:1cm) {$\cdot$};
\end{tikzpicture}%
%\endpgfgraphicnamed
 \, .$$
 Note that $\omega$ must be a $n^{th}$ root of unity.
\end{defn}

Given a shaded planar algebra we can recover two tensor categories called its even parts.  However to describe this we need to deal with an annoying technical point.  Recall that the planar algebra only captures maps between tensor products of a fixed generating object.  If every object of a semisimple tensor category appears as a summand of some tensor power of a fixed object, then the tensor category can be recovered from the full subcategory of these tensor powers by taking the idempotent completion.

\begin{defn}
The idempotent completion of a tensor category $\check{\cC}$ is a tensor category $\cC$ which contains $\check{\cC}$ as a full sub-category. The objects of $\cC$ are pairs $(o,p)$, where $o$ is an object of $\check{\cC}$ and $p:o \to o$ is an idempotent in $\check{\cC}$.  We define $$\Hom{\cC}{(o,p)}{(o',p')} = \setcl{ f \in \Hom{\check{\cC}}{o}{o'} }{ f p = f = p' f}$$ and inherit composition and tensor product.  (When we say $\check{\cC}$ is a full sub-category of $\cC$ we are implicitly identifying $o$ with $(o,\id_o)$.)
\end{defn}

\begin{defn}
If $\cP$ is a shaded planar algebra, let $\check{\cP}^p$ be the category whose objects are even integers and whose morphisms are given by
$$\Hom{\check{\cP}^p}{2m}{2n} = V_{m+n,+}.$$
 Let $\cP^p$, called the principal even part of $\cP$ be the idempotent completion of the additive completion of $\check{\cP}^p$.

Similarly define $\check{\cP}^d$ whose objects are even integers and whose morphisms are given by
$$\Hom{\check{\cP}^d}{2m}{2n} = V_{m+n,-},$$ and the dual even part $\cP^d$ to be the idempotent completion of the additive completion of $\check{\cP}^d$.
\end{defn}

A planar algebra is called \emph{unitary} if there is an antilinear adjoint operation $*$ on each $V_{n,\pm}$, compatible with the adjoint operation on planar tangles given by reflection such that the sesquilinear form $\langle x,y \rangle = \tr{x y^*}$ is positive definite.  The even parts of a unitary planar algebra are semisimple because all the endomorphism algebras are finite dimensional $\Complex^*$ algebras which are necessarily semisimple.  Often in order to construct a unitary planar algebra from a non-unitary planar algebra we quotient out by the radical of $\langle x,y \rangle = \tr{x y^*}$, which is called the ideal of negligible morphisms.

Suppose that $k<K$ is an inclusion of fields and suppose that $\cP$ is a planar algebra over $K$.  A \emph{rational form} of $\cP$ over $k$ is a planar algebra $\cP_k$ over $k$ such that $\cP_k \otimes_k K$ is isomorphic to $\cP$.  If $\cP_k$ is a rational form for $\cP$ then the corresponding fusion categories $\cP_k^p$ and $\cP_k^d$ are rational forms for $\cP^p$ and $\cP^d$.  The rational form $\cP_k^p$ is a complete rational form for $\cP^p$ if every isomorphism class of projection in $V_{m+n,+}(\cP)$ has a representative coming from $V_{m+n,+}(\cP^p)$ (and similarly for the dual even part).

\subsection{Principal graphs}

Given a unitary irreducible shaded planar algebra $\cP$ the principal graphs are a pair of bipartite graphs which together encode the fusion rules for tensoring with the single strand.

The principal graph has even vertices corresponding to isomorphism classes of simple projections in $V_{\text{even},+}$ and odd vertices corresponding to the isomorphism classes of simple projections in $V_{\text{odd},+}$.  An even vertex $V$ and an odd vertex $W$ are connected by $\dim \Hom{}{W \otimes X}{V}$ edges, where $X$ is the single strand.  The dual principal graph has even vertices corresponding to isomorphism classes of simple projections in $V_{\text{even},-}$, odd vertices corresponding $V_{\text{odd},-}$, and edges given by the fusion with the single strand of the opposite shading.

A subfactor is called \emph{finite depth} if the principal graph is finite.

\subsection{The lopsided normalization}
\label{sec:lopsided}%
In an irreducible shaded planar algebra $\cP$ over a field $k$, the shaded inside and shaded outside circles each evaluate to a scalar multiple of the empty diagram. These multiples are called the shaded and unshaded moduli. The product of the two moduli is called the index of $\cP$.

If the two moduli are equal, we say the planar algebra is spherical. (In particular, because in this section we are assuming irreducibility, this condition implies the usual notion of a planar algebra being spherical.) If the shaded modulus is equal to one, we say the planar algebra is lopsided, and the unshaded modulus is the index.

Any irreducible shaded planar algebra $\cP$ over $\Complex$ is part of a family $\left\{\cP_x\right\}_{x \in \Complex}$ of planar algebras with $\cP_1 = \cP$, and $\cP_x$ given by changing the action of the planar operad by a factor of $x^k$, where $k$ is the signed count of critical points in the strands which are shaded above and unshaded below (minima counting positively, maxima counting negatively). The index is constant across this family, but the shaded and unshaded moduli scale by $x$ and $x^{-1}$. 

It may be that the planar algebra $\cP_x$ has a rational form over a field $k$ for certain $x$, but not all $x$.  In particular, given a planar algebra $\cC$ over $k$ with unshaded modulus $x \in k$, then $\cC_{x^{-1}}$ is a lopsided planar algebra over $k(x)=k$.  On the other hand, given a lopsided planar algebra $\cC$ over $k$ with index $D \in k$ the corresponding spherical planar algebra $\cC_{\sqrt{D}}$ is in general only defined over $k(\sqrt{D})$.  The fact that the field of definition may need to increase to ensure being spherical but does not need to increase to ensure being lopsided should encourage you to prefer lopsided planar algebras over spherical planar algebras. (It's also natural from a subfactor perspective: $B$ as an $A-B$ bimodule has left-dimension the index and right-dimension $1$.)

\newcommand{\trl}[1]{\operatorname{tr}_{L}\left(#1\right)}
\newcommand{\trr}[1]{\operatorname{tr}_{R}\left(#1\right)}

Any element $v \in \cP_{n,\pm}$ has a left trace and a right trace, satisfying
\begin{align*}
\trl{v} & =
	\begin{cases}
		\trr{v} & \text{if $n$ is even} \\
		\frac{d_{\mp}}{d_{\pm}} \trr{v} & \text{if $n$ is odd}
	\end{cases}
\end{align*}
where $d_+$ is the unshaded modulus and $d_-$ is the shaded modulus. When $\cP$ is spherical, the left and right traces coincide. We will sometimes refer to the trace of an idempotent as its dimension, and if a side is not specified we always intend the right trace.

\begin{example}
The two strand Jones-Wenzl idempotents in lopsided Temperley-Lieb are 
\begin{align*}
\begin{tikzpicture}[baseline=-1ex]
\node[fill=white, draw, rectangle] (left) at (0,0) {$\JW{2}$};
\begin{pgfonlayer}{background} 
\path[fill=black!20]
	($(left.240)+(0,-0.5)$) -- ($(left.120)+(0,0.5)$) -- ($(left.60)+(0,0.5)$) -- ($(left.300)+(0,-0.5)$);
\draw ($(left.240)+(0,-0.5)$) -- ($(left.120)+(0,0.5)$);
\draw  ($(left.60)+(0,0.5)$) -- ($(left.300)+(0,-0.5)$);
\end{pgfonlayer}
\end{tikzpicture}
& = 
\begin{tikzpicture}[baseline=-1ex]
\clip (-0.75,-0.8) rectangle (0.75,0.8);
\path[fill=black!20]
	 (-0.5,-0.8) -- (-0.5,0.8) -- (0.5,0.8) -- (0.5,-0.8);
\draw (-0.5,-0.8) -- (-0.5,0.8);
\draw  (0.5,0.8) -- (0.5,-0.8);
\end{tikzpicture}
 - 
\begin{tikzpicture}[baseline=-1ex]
\clip (-0.75,-0.8) rectangle (0.75,0.8);
\path[draw, fill=black!20]
	 (-0.5,-0.8) .. controls ++(0,0.7) and ++(0,0.7) .. (0.5,-0.8);
\path[draw, fill=black!20]
	 (-0.5,0.8) .. controls ++(0,-0.7) and ++(0,-0.7) .. (0.5,0.8);
\end{tikzpicture}
\displaybreak[1] \\
\intertext{and}
\begin{tikzpicture}[baseline=-1ex]
\node[fill=white, draw, rectangle] (left) at (0,0) {$\JW{2}$};
\begin{pgfonlayer}{background} 
\path[fill=black!20] (-0.75,-0.8) rectangle (0.75,0.8);
\path[fill=white]
	($(left.240)+(0,-0.5)$) -- ($(left.120)+(0,0.5)$) -- ($(left.60)+(0,0.5)$) -- ($(left.300)+(0,-0.5)$);
\draw ($(left.240)+(0,-0.5)$) -- ($(left.120)+(0,0.5)$);
\draw  ($(left.60)+(0,0.5)$) -- ($(left.300)+(0,-0.5)$);
\end{pgfonlayer}
\end{tikzpicture}
& = 
\begin{tikzpicture}[baseline=-1ex]
\path[fill=black!20] (-0.75,-0.8) rectangle (0.75,0.8);
\path[fill=white]
	 (-0.5,-0.8) -- (-0.5,0.8) -- (0.5,0.8) -- (0.5,-0.8);
\draw (-0.5,-0.8) -- (-0.5,0.8);
\draw  (0.5,0.8) -- (0.5,-0.8);
\end{tikzpicture}
 -  \frac{1}{[2]^2}
\begin{tikzpicture}[baseline=-1ex]
\path[fill=black!20] (-0.75,-0.8) rectangle (0.75,0.8);
\path[draw, fill=white]
	 (-0.5,-0.8) .. controls ++(0,0.7) and ++(0,0.7) .. (0.5,-0.8);
\path[draw, fill=white]
	 (-0.5,0.8) .. controls ++(0,-0.7) and ++(0,-0.7) .. (0.5,0.8);
\end{tikzpicture}
\end{align*}
\end{example}

\begin{example}
Consider an irreducible shaded planar algebra $\cP$ with index $D$. Choose $q$ so $q+q^{-1} = \sqrt{D}$. If $\cP$ is spherical then the dimensions of the Jones-Wenzl idempotents are given by
\begin{align*}
\trr{\JW{n} \in \cP_{n,\pm}} & = [n+1]_q \in \Rational(\sqrt{D}) \\
\intertext{while when $\cP$ is lopsided they are}
\trr{\JW{n} \in \cP_{n,+}} & =
	\begin{cases}
		\frac{[n+1]_q}{[2]_q} & \text{if $n$ is odd} \\
		[n+1]_q & \text{if $n$ is even}
	\end{cases}
	\\
	& \in \Rational(D) \displaybreak[1] \\
\trr{\JW{n} \in \cP_{n,-}} & =
	\begin{cases}
		[2]_q [n+1]_q & \text{if $n$ is odd} \\
		[n+1]_q & \text{if $n$ is even}
	\end{cases}
	\\
	& \in \Rational(D)
\end{align*}
\end{example}

\section{The shaded planar algebras $\cH_0$ and $\cH_1$}
\label{sec:construction}
%!TEX root = ../article.tex

The goal of this section is to prove Theorem \ref{thm:detailed} part \ref{pa:construct} and the ``if" direction of parts \ref{pa:dualprojs} and \ref{pa:principalprojs}.   Thus we explicitly construct certain rational forms and determine when they are complete.  These constructions all follow quickly from the generators and relations presentation of the `Haagerup subfactor planar algebra' $\cH_0$ (from \cite{0902.1294}) and of the `extended Haagerup subfactor planar algebra' $\cH_1$ (from \cite{0909.4099}).   We modify these previous constructions slightly by using the lopsided normalization for cups and caps instead of the spherical one.  Furthermore, we have rescaled the generator: our $S$ is $\lambda_\ell$ times the $S$ of \cite{0909.4099, 0902.1294}.  (Both of these modifications were made in order to simplify the numbers that appear in the definition so that they lie in a smaller field.)

Let $\ell=0$ or $1$, and let $n=4\ell+4$.  Choose $q$ so $q+q^{-1}=\sqrt{D_\ell}$, and define the quantum integers as usual by $[m]=\frac{q^m - q^{-m}}{q-q^{-1}} \in \Integer[\sqrt{D_\ell}]$ (so in particular, $[2] = \sqrt{D_\ell}$).  
If $m$ is odd then $[m] \in \Rational(D_\ell)$.  If $m$ is even, then $[2][m] = [m+1]+[m-1]\in \Rational(D_\ell)$, and also any ratio of two even quantum integers lies in $\Rational(D_\ell)$.  We will write $\check{r} = \frac{[n+2]}{[n]} \in \Rational(D_\ell)$ throughout. (This number is the ratio of Perron-Frobenius dimensions of the two vertices past the branch point on the dual principal graph. The corresponding ratio on the principal graph is just $1$.)

Define $\lambda_\ell = [2]^{-1}\sqrt{-\check{r}}$ (these are the same as the explicit numbers given in the introduction) and note that $\lambda_\ell^2 = - \frac{[n+2]}{[2]^2[n]} \in \Rational(D_\ell)$. 
%Further, \todo{Why is $d_\ell^2 \in \Rational(\lambda_\ell)$?} 
Since $\lambda_\ell^2$ is not rational, and the degrees $[\Rational(D_\ell):\Rational]$ are prime, it follows that $\Rational(D_\ell) = \Rational(\lambda_\ell^2)$, and in particular 
$\Rational(D_\ell) \subset \Rational(\lambda_\ell) \subset \Complex$. 

\begin{defn} \label{def:pa}
Let $k$ be any field $\Rational(D_\ell) \subset k \subset \Complex$.  Let $\cQ_\ell(k)$ be the shaded planar algebra over $k$ generated by a single $n$-box $S \in \cQ_\ell(k)_{n,+}$ subject to relations (1)-(6) below.
\begin{enumerate}
\item The lopsided moduli are $1$ and $D_\ell$.  That is, the shaded circle is $1$ and the unshaded circle is $D_\ell$.
\item $\rho(S)=-S$,
\item $S$ is uncappable,
\item $S^2 = \lambda_\ell^2 \JW{n}$,
\item \begin{align*}
	\scalebox{0.8}{%
%\beginpgfgraphicnamed{\pathtotrunk diagrams/tikz/#1-external}%
\begin{tikzpicture}[STrain]
	\RainbowOne;
        \draw (0,0)--(0,-0.5);
        \node[anchor=west] at (0,-0.35) {\footnotesize$2n+2$};
	\JWPlusTwo;
\end{tikzpicture}
%
%\endpgfgraphicnamed
}
	        & =  [2] [n]
	\scalebox{0.8}{%
%\beginpgfgraphicnamed{\pathtotrunk diagrams/tikz/#1-external}%
\begin{tikzpicture}[STrain]
	\STrainStrings{$n+1$}{$n+1$} \STrainOne
        \draw (0,0)--(0,-0.5);
        \node[anchor=west] at (0,-0.35) {\footnotesize$2n+2$};
	\JWPlusTwo
\end{tikzpicture}
%
%\endpgfgraphicnamed
},
	\end{align*}
\item and
	\begin{align*}
	\scalebox{0.8}{%
%\beginpgfgraphicnamed{\pathtotrunk diagrams/tikz/#1-external}%
\begin{tikzpicture}[STrain]
	\RainbowTwo
        \draw (0,0)--(0,-0.5);
        \node[anchor=west] at (0,-0.35) {\footnotesize$2n+4$};
	\JWPlusFour
\end{tikzpicture}
%
%\endpgfgraphicnamed
} 
	         = \frac{1}{\lambda_\ell^2} \frac{1}{[n+1]} \frac{[2n+4]}{[n+2]}
        \scalebox{0.8}{%
%\beginpgfgraphicnamed{\pathtotrunk diagrams/tikz/#1-external}%
\begin{tikzpicture}[STrain]
	\STrainThreeStrings{$n+1$}{$2$}{$n+1$} \STrainOneOne
        \draw (0,0)--(0,-0.5);
        \node[anchor=west] at (0,-0.35) {\footnotesize$2n+4$};
	\JWPlusFour
\end{tikzpicture}
%
%\endpgfgraphicnamed
}.
	\end{align*}
\end{enumerate}
\end{defn}

The planar algebra $\cQ_\ell(k)$ isn't nondegenerate, and hence doesn't yield semisimple tensor categories.  We apply the usual semisimplification procedure.

\begin{defn}
Let $\cH_\ell(k)$ be $\cQ_\ell(k) / \cN_\ell(k)$, where $\cN_\ell(k)$ is the ideal of negligible morphisms in $\cQ_\ell(k)$.
\end{defn}

\begin{thm}
$\cH_\ell(\Complex)$ is a unitary irreducible planar algebra with principal graph pair $\cH_\ell$.  Hence, the tensor categories $\cH_\ell^p(k)$ and $\cH_\ell^d(k)$ are fusion categories.
\end{thm}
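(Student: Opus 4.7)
The plan is to reduce this statement to the already-established presentations of the Haagerup and extended Haagerup subfactor planar algebras in \cite{0902.1294, 0909.4099}, and then transport the properties stated there across the identification. The generators-and-relations description in Definition \ref{def:pa} is, by design, an almost verbatim copy of the presentations in those papers with only two conventional changes: a lopsided (rather than spherical) cup/cap normalization, and a rescaling of the generator $S$ by the factor $\lambda_\ell$.

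First I would write down an explicit isomorphism of planar algebras between $\cQ_\ell(\Complex)$ and the unquotiented generators-and-relations planar algebra of \cite{0902.1294} (resp.\ \cite{0909.4099}). Using the one-parameter family $\{\cP_x\}$ from Section \ref{sec:lopsided} with $x = \sqrt{D_\ell}$ converts our lopsided presentation into a spherical one with shaded and unshaded moduli both equal to $[2]_q$, while simultaneously rescaling $S$ by $\lambda_\ell^{-1}$ brings it to the generator used in the cited presentations. The substance of this step is then bookkeeping: I would check that each of the relations (2)--(6) of Definition \ref{def:pa}, when rescaled by the appropriate power of $\sqrt{D_\ell}$ coming from the count of shaded critical points in the underlying tangle and by the appropriate power of $\lambda_\ell$ coming from the number of occurrences of $S$, becomes precisely the corresponding relation in \cite{0902.1294, 0909.4099}. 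The rotational eigenvalue $-1$ is preserved trivially since rescaling is by a scalar, uncappability is preserved since the cap tangles do not create new critical points on the generator, and the scalars in (4), (5), (6) have been chosen (via the explicit formulas for $\lambda_\ell^2$ and $\check{r}$) so that they balance the conversion.

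With the isomorphism $\cQ_\ell(\Complex) \to \widetilde{\cQ_\ell}(\Complex)$ to the spherical presentations in hand, it sends negligibles to negligibles (the ideal of negligibles is intrinsic to the planar algebra), so the quotient $\cH_\ell(\Complex)$ is isomorphic to the (extended) Haagerup subfactor planar algebra. Unitarity, irreducibility, and the identification of the principal graph pair with the claimed $\cH_\ell$ then follow directly from the main theorems of \cite{0902.1294} and \cite{0909.4099}. The final clause, that $\cH_\ell^p(k)$ and $\cH_\ell^d(k)$ are fusion categories, is then immediate: unitarity of $\cH_\ell(\Complex)$ makes the endomorphism algebras in the even parts finite-dimensional $C^*$-algebras, hence semisimple; rigidity is automatic from the planar algebra structure; finite depth of the principal graphs of $\cH_\ell$ ensures only finitely many isomorphism classes of simple objects; and $k$-linearity is built in.

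The main obstacle is the explicit verification at the end of the first step: matching scalars between lopsided-and-rescaled relations and the published spherical-and-unrescaled relations requires careful attention to three independent normalization choices (the critical-point exponent $x^{\pm 1}$, the $\lambda_\ell$-rescaling of $S$, and the conversion between the two conventions for Jones--Wenzl dimensions recorded in Section \ref{sec:lopsided}). Nothing is conceptually difficult, but getting all the powers of $\sqrt{D_\ell}$ and $\lambda_\ell$ to balance in relations (5) and (6) is the one place where a careless sign or exponent would invalidate the whole argument.
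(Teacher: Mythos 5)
Your overall approach matches the paper's: reduce the first statement to the generators-and-relations presentations of \cite{0902.1294, 0909.4099} by undoing the two normalization changes (lopsided $\to$ spherical via $\cP_{\sqrt{D_\ell}}$, and rescaling $S$ by $\lambda_\ell^{-1}$), and then deduce the fusion-category property from unitarity. The paper dispatches the first part with ``follows immediately from [Theorem 5.5]{0902.1294} and [Theorem 3.10]{0909.4099}''; your more detailed bookkeeping plan is consistent with this and fills in what the paper takes for granted.

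However, there is a genuine gap in your deduction of the ``Hence'' clause. You argue that unitarity of $\cH_\ell(\Complex)$ makes the endomorphism algebras of $\cH_\ell^p(\Complex)$ and $\cH_\ell^d(\Complex)$ finite-dimensional $C^*$-algebras, hence semisimple --- which is correct, but only for the base field $\Complex$. The theorem asserts that $\cH_\ell^p(k)$ and $\cH_\ell^d(k)$ are fusion categories for \emph{every} field $\Rational(D_\ell) \subset k \subset \Complex$, and these are $k$-linear categories whose endomorphism algebras are $k$-algebras, not $C^*$-algebras. Noting that ``$k$-linearity is built in'' does not supply the missing semisimplicity over $k$. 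The paper closes this gap explicitly with the observation that nonsemisimplicity is preserved under base extension, i.e.\ if the idempotent completion of $\cC \otimes_k K$ is semisimple then $\cC$ is already semisimple (the background lemma characterizing semisimplicity via semisimplicity of endomorphism algebras, plus the fact that the Jacobson radical of a finite-dimensional $k$-algebra $A$ extends to a nonzero nilpotent ideal of $A \otimes_k K$). You need to invoke this descent argument to get from semisimplicity of $\cH_\ell^p(\Complex)$, established by the $C^*$ argument, to semisimplicity of $\cH_\ell^p(k)$.
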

\begin{proof}
The first statement follows immediately from \cite[Theorem 5.5]{0902.1294} (for $\ell=0$) and from \cite[Theorem 3.10]{0909.4099} (for $\ell=1$).  Unitarity implies that $\cH_\ell^p(\Complex)$ and $\cH_\ell^d(\Complex)$ are semisimple, and hence $\cH_\ell^p(k)$ and $\cH_\ell^d(k)$ are semisimple also (since nonsemisimplicity is preserved under base extension).  These categories are fusion because the principal graphs are finite.
\end{proof}

%The results of \cite{0902.1294} (for $\ell=0$) and of \cite{0909.4099} (for $\ell=1$) show that $\cH_\ell(\Complex)$ is non-zero, and a subfactor planar algebra. Since $\cH_\ell(\Complex) = \cH_\ell(k) \tensor_k \Complex$, this implies that $\cH_\ell(k)$ is also non-zero.

\begin{thm}
All the minimal idempotents in $\cH_\ell(\Complex)$ (corresponding to the vertices of the principal graph) in fact lie inside $\cH_\ell(\Rational(\lambda_\ell))$.  All of the minimal idempotents in the even part of the dual principal graph lie inside $\cH_\ell(\Rational(D_\ell))$.
\end{thm}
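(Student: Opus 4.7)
The strategy is to exhibit, for each vertex of either principal graph, an explicit representative minimal projection and check that its defining scalars lie in the required field.

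Up to the branch point the Bratteli diagram of $\cH_\ell$ agrees with that of Temperley--Lieb, so the minimal projection at each vertex of depth $k<n$ is represented by the Jones--Wenzl idempotent $\JW{k}$ (in $V_{k,+}$ for the principal side and in $V_{k,-}$ for the dual side). Wenzl's recursion expresses $\JW{k+1}$ in terms of $\JW{k}$ and $\JW{k-1}$ with coefficients of the form $[k]/[k+1]$, which as observed just above Definition~\ref{def:pa} all lie in $\Rational(D_\ell)$. So all minimal projections of depth ${<}n$ are defined over $\Rational(D_\ell)\subseteq\Rational(\lambda_\ell)$.

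At the branch on the principal graph, $\JW{n}$ ceases to be minimal, and the relation $S^2=\lambda_\ell^2\JW{n}$ exhibits $\lambda_\ell^{-1}S$ as a square root of $\JW{n}$ inside the cut-down algebra $\JW{n}V_{n,+}(\cH_\ell)\JW{n}$. Consequently
\[
P=\tfrac{1}{2}\bigl(\JW{n}+\lambda_\ell^{-1}S\bigr),\qquad Q=\tfrac{1}{2}\bigl(\JW{n}-\lambda_\ell^{-1}S\bigr)
\]
are orthogonal minimal idempotents summing to $\JW{n}$, each defined over $\Rational(\lambda_\ell)$. Beyond the branch one proceeds inductively: given a minimal projection $\pi$ at depth $d$, the element $\pi\otimes 1$ decomposes according to the principal graph as a sum of already-known minimal projections at depth $d-1$ together with exactly one new minimal projection at depth $d+1$, which can be extracted by subtraction after rescaling by ratios of traces (equivalently, Perron--Frobenius dimensions) that lie in $\Rational(D_\ell)$. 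Thus all principal minimal projections remain in $\Rational(\lambda_\ell)$.

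For the dual principal graph the analogous induction applies once the dual branch is split. The key point is that an element of $\JW{n}V_{n,-}(\cH_\ell)\JW{n}$ sufficient to split this branch can be produced by a planar contraction of two copies of $S$, so that every scalar introduced is rational in $\lambda_\ell^2$, $D_\ell$, and $\check{r}$, all lying in $\Rational(D_\ell)$. The resulting splitting $\JW{n}=P^{d}+Q^{d}$ is not symmetric (the trace ratio is $\check{r}$ rather than $1$), but this affects only coefficients that themselves lie in $\Rational(D_\ell)$. The same propagation via $\cdot\otimes 1$ then produces the remaining minimal projections of the dual graph, all defined over $\Rational(D_\ell)$. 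The main obstacle I expect is the concrete construction of this dual generator: one has to exhibit a specific quadratic combination of $S$ whose image in the cut-down algebra is nonzero and whose square (and other structure constants forced by relations (1)--(6)) truly reduces to $\Rational(D_\ell)$. A secondary complication is bookkeeping in the extended Haagerup case ($\ell=1$), where the graphs are deeper and the inductive step must be carried out more times, but no new scalars enter beyond quantum integers and the single use of $\lambda_\ell$ at the principal branch.
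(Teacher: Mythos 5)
Your overall strategy---exhibit explicit representatives for each vertex and track where the scalars live---is the paper's strategy, and you correctly recover the principal side essentially as the paper does: Jones--Wenzl projections below the branch, the splitting $\JW{n}=P+Q$ via the square root $\lambda_\ell^{-1}S$ of $\JW{n}$, and the inductive peeling of $P',P'',Q',Q''$ by trace-ratio coefficients in $\Rational(D_\ell)$. The genuine gap is the dual branch, which you defer and also point in the wrong direction. The paper's dual generator is $\rho^{1/2}(S)\in V_{n,-}$, the one-click rotation of $S$ into the opposite shading---a \emph{single} copy of $S$, not a ``planar contraction of two copies of $S$'' and not a ``quadratic combination of $S$.'' The decisive computation is Lemma~\ref{lem:S2}: rewriting the formula for $\rho^{1/2}(S)^2$ from \cite{0909.4099} in the lopsided normalization and with the $\lambda_\ell$ rescaling yields the \emph{inhomogeneous} quadratic
$$\rho^{1/2}(S)^2 = (\check{r}-1)\,\rho^{1/2}(S) + \check{r}\,\JW{n},$$
with coefficients, and hence roots $\check{r}$ and $-1$, in $\Rational(D_\ell)$; this is exactly what makes $A=\frac{1}{\check{r}+1}\bigl(\JW{n}+\rho^{1/2}(S)\bigr)$ and $B=\frac{1}{\check{r}+1}\bigl(\check{r}\JW{n}-\rho^{1/2}(S)\bigr)$ projections over $\Rational(D_\ell)$ (Lemma~\ref{lem:idempotents1}). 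The asymmetry in the theorem is exactly the asymmetry between $S^2=\lambda_\ell^2\JW{n}$ being \emph{homogeneous} (eigenvalues $\pm\lambda_\ell$, forcing $\Rational(\lambda_\ell)$) and $\rho^{1/2}(S)$ satisfying an inhomogeneous minimal polynomial with roots already in $\Rational(D_\ell)$. Your framing hides this: the generator is \emph{linear} in $S$; what is quadratic is the relation its square satisfies.

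Two smaller issues. The Wenzl coefficient $[k]/[k+1]$ is the spherical one and lies in $\sqrt{D_\ell}\,\Rational(D_\ell)$ rather than $\Rational(D_\ell)$ whenever exactly one of $k,k+1$ is even; the reason the Temperley--Lieb part works over $\Rational(D_\ell)$ is the lopsided normalization of Section~\ref{sec:lopsided}, and you should be citing that, not the raw spherical recursion. Also, your closing claim that the remaining minimal projections of the dual graph are ``all defined over $\Rational(D_\ell)$'' over-reaches: the odd dual vertices $C$, $D$ are obtained in the paper (Lemma~\ref{lem:idempotents3}) as rotations $\rho^{n/2}(P')$, $\rho^{n/2}(Q')$ and therefore are only shown to lie over $\Rational(\lambda_\ell)$. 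This does not affect the theorem, which speaks only of the \emph{even} part of the dual graph, but the claim should be dropped.
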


\begin{proof}
We give explicit formulas for all the idempotents, in Lemmas \ref{lem:idempotents1}, \ref{lem:idempotents2} and \ref{lem:idempotents3}, which occupy the rest of this section. Our naming conventions for the idempotents are shown in Figure \ref{fig:naming}.
\end{proof}
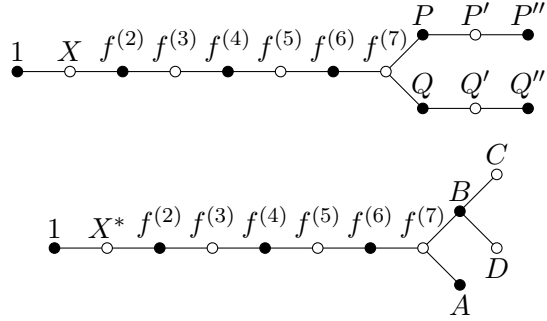
\begin{figure}[!htb]
$$\begin{tikzpicture}[baseline=-1mm, scale=.7]
	\draw (0,0) -- (7,0);
	\draw (7,0)--(7.7,.7)--(9.7,.7);
	\draw (7,0)--(7.7,-.7)--(9.7,-.7);

	\filldraw              (0,0) node [above] {$1$} circle (1mm);
	\filldraw [fill=white] (1,0) node [above] {$X$} circle (1mm);
	\filldraw              (2,0) node [above] {$\JW{2}$}  circle (1mm);
	\filldraw [fill=white] (3,0) node [above] {$\JW{3}$}  circle (1mm);
	\filldraw              (4,0) node [above] {$\JW{4}$}  circle (1mm);
	\filldraw [fill=white] (5,0) node [above] {$\JW{5}$}  circle (1mm);
	\filldraw              (6,0) node [above] {$\JW{6}$}  circle (1mm);
	\filldraw [fill=white] (7,0) node [above] {$\JW{7}$}  circle (1mm);

	\filldraw              (7.7,.7) node [above] {$P$}  circle (1mm);
	\filldraw [fill=white] (8.7,.7) node [above] {$P'$}  circle (1mm);
	\filldraw              (9.7,.7) node [above] {$P''$}  circle (1mm);
	\filldraw              (7.7,-.7) node [above] {$Q$}  circle (1mm);
	\filldraw [fill=white] (8.7,-.7) node [above] {$Q'$}  circle (1mm);
	\filldraw              (9.7,-.7) node [above] {$Q''$}  circle (1mm);
\end{tikzpicture}$$

$$\begin{tikzpicture}[baseline=-1mm, scale=.7]
	\draw (0,0) -- (7,0);
	\draw (7,0)--(7.7,.7);
	\draw (7,0)--(7.7,-.7);
	\draw (7.7,.7)--(8.4,1.4);
	\draw (7.7,.7)--(8.4,0);

	\filldraw              (0,0) node [above] {$1$} circle (1mm);
	\filldraw [fill=white] (1,0) node [above] {$X^*$} circle (1mm);
	\filldraw              (2,0) node [above] {$\JW{2}$}  circle (1mm);
	\filldraw [fill=white] (3,0) node [above] {$\JW{3}$}  circle (1mm);
	\filldraw              (4,0) node [above] {$\JW{4}$}  circle (1mm);
	\filldraw [fill=white] (5,0) node [above] {$\JW{5}$}  circle (1mm);
	\filldraw              (6,0) node [above] {$\JW{6}$}  circle (1mm);
	\filldraw [fill=white] (7,0) node [above] {$\JW{7}$}  circle (1mm);

	\filldraw              (7.7,.7) node [above] {$B$}  circle (1mm);
	\filldraw              (7.7,-.7) node [below] {$A$}  circle (1mm);
	\filldraw [fill=white] (8.4,1.4) node [above] {$C$}  circle (1mm);
	\filldraw [fill=white] (8.4,0) node [below] {$D$}  circle (1mm);
\end{tikzpicture}$$
\caption{The names for the minimal idempotents in $\cH_1(\Complex)$. Even vertices are shown as filled circles, and odd vertices are shown as hollow circles.  The convention for $\cH_0(\Complex)$ is the same but with fewer $\JW{i}$.}
\label{fig:naming}
\end{figure}

\begin{lem}
\label{lem:S2}%
The `one-click' rotation $\rho^{1/2}(S)$, which is an element of $\cH_\ell(k)_{n,-}$, satisfies
\begin{align}
\rho^{1/2}(S)^2 & = -i [2] \lambda_\ell (\check{r}^{1/2} - \check{r}^{-1/2}) \rho^{1/2}(S) - [2]^2 \lambda_\ell^2 \JW{n} \notag \\
	& =  (\check{r}-1) \rho^{1/2}(S) + \check{r} \JW{n}. \label{eq:rhoS2}
\end{align}
\end{lem}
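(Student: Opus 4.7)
The equivalence of the two displayed forms is an algebraic identity: from $\lambda_\ell^2 = -\check r/[2]^2$ one has $-[2]^2\lambda_\ell^2 = \check r$, and with the coherent branch choice $\sqrt{-\check r} = i\sqrt{\check r}$ one has $-i[2]\lambda_\ell(\check r^{1/2}-\check r^{-1/2}) = \sqrt{\check r}(\check r^{1/2}-\check r^{-1/2}) = \check r - 1$. It therefore suffices to establish
\[
\rho^{1/2}(S)^2 = (\check r - 1)\,\rho^{1/2}(S) + \check r\,\JW{n}.
\]

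My approach is a direct planar-tangle computation, which is essentially Jones's ``quadratic tangle'' calculation of \cite{quadratic} specialized to our lopsided generators. The product $\rho^{1/2}(S)\cdot\rho^{1/2}(S)$ is a tangle with two internal $S$-boxes; after an isotopy that unwinds the two half-click rotations around the outer boundary, it decomposes as a linear combination of two building blocks: (a) the ``straight'' vertical composition $S\cdot S$, which by relation (4) reduces to $\lambda_\ell^2\JW{n}$; and (b) a partial-trace configuration in which the two $S$-boxes are joined through caps on the sides, which by relation (6) (aided by relation (5)) reduces to a scalar multiple of a single $S$-box and hence of $\rho^{1/2}(S)$. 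Collecting the prefactors from the isotopy (which carry powers of the modulus $D_\ell = [2]^2$ from the lopsided normalization and the sign from relation (2), $\rho(S) = -S$), together with the reduction coefficients from relations (4)--(6), and then simplifying using $\lambda_\ell^2 = -\check r/[2]^2$ and the quantum-integer identity $[2n+4]/[n+2] = q^{n+2}+q^{-n-2}$, yields the stated coefficients $\check r - 1$ and $\check r$.

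One can alternatively set up the calculation more structurally: since $\rho^{1/2}(S) \in V_{n,-}$ is uncappable and a rotational eigenvector (inherited from relations (2) and (3)), we have $\JW{n}\cdot\rho^{1/2}(S)^2\cdot\JW{n} = \rho^{1/2}(S)^2$, and a decomposition $\rho^{1/2}(S)^2 = \alpha\,\rho^{1/2}(S) + \beta\,\JW{n} + \textrm{(negligible)}$ exists in the uncappable subspace. One then isolates $\beta$ by taking the right trace of both sides (using $\trr{\rho^{1/2}(S)} = 0$ by uncappability) and $\alpha$ by first multiplying by $\rho^{1/2}(S)$ and tracing; each of these two scalar identities is evaluated by a planar isotopy and application of the relations as above.

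The main obstacle is the planar-isotopy bookkeeping: correctly identifying how $\rho^{1/2}(S)^2$ splits into the ``straight'' and ``partial trace'' pieces that relations (4)--(6) know how to evaluate, while keeping track of the $[2]$-factors from the lopsided normalization and the sign from $\rho(S) = -S$. Once the isotopy is pinned down, the residual quantum-integer arithmetic to arrive at $\check r - 1$ and $\check r$ is elementary.
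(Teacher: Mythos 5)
Your route is genuinely different from the paper's, and it has a real gap. The paper does not compute $\rho^{1/2}(S)^2$ at all: it quotes the formula from \cite[Theorem 3.9]{0909.4099} (where the quadratic-tangle computation was already carried out in the spherical, unnormalized setting), then accounts for two conventional changes --- the generator there is $\lambda_\ell^{-1}S$, and in the lopsided normalization each $\rho^{1/2}$ carries an extra $[2]^{-1}$ (one critical point, shaded above) --- and finally substitutes $r=1$, $\omega=-1$ and simplifies. Your proposal is instead to re-derive the quadratic relation from the generators and relations of Definition~\ref{def:pa}, which is self-contained in spirit but amounts to re-proving the cited theorem.

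The algebraic reconciliation of the two displayed lines (using $\lambda_\ell^2 = -\check r/[2]^2$ and the branch choice $\sqrt{-\check r}=i\sqrt{\check r}$) is correct. The problem is with the core step. You write that ``after an isotopy that unwinds the two half-click rotations around the outer boundary, it decomposes as a linear combination of two building blocks,'' but an isotopy cannot produce a linear combination --- that passage conflates moving strings around with applying relations, and it is precisely here that the nontrivial work lives. The structural variant also has an unjustified claim: that $\JW{n}\,\rho^{1/2}(S)^2\,\JW{n}$ lies in the span of $\JW{n}$, $\rho^{1/2}(S)$ and negligibles. This is true, but it is a statement about the annular structure of $V_{n,-}$ (it uses $n$-supertransitivity and that the annular multiplicity past the Temperley--Lieb part is exactly one) and needs to be invoked explicitly; and even granting it, determining $\alpha$ and $\beta$ by taking traces still requires evaluating exactly the same tangles, with exactly the $[2]$-factors and sign bookkeeping you flag as the ``main obstacle.'' In short: the outline is sound and could be turned into a proof, but the step you defer --- the planar decomposition and the resulting quantum-integer arithmetic with the coefficients from relations (5) and (6) --- is the whole content of the lemma, and is left undone. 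The paper avoids this by citing it.
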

\begin{proof}
Take the formula for $\rho^{1/2}(S)^2$ appearing in \cite[Theorem 3.9]{0909.4099},
% $$\rho^{1/2}(S)^2 = - \omega^{1/2} r^{1/2} (\check{r}^{1/2} - \check{r}^{-1/2}) \rho^{1/2}(S) + \omega^{-1} r \JW{n}.$$
and recall that the $S$ there is $\lambda_\ell^{-1}$ times our generator $S$, and that since we are working in the lopsided planar algebra, there is an additional factor of $[2]^{-1}$ with each occurrence of $\rho^{1/2}$. This factor arises because in the notation of \S \ref{sec:lopsided}, if the planar algebras from \cite{0902.1294, 0909.4099} are denoted $\cP$, we're looking at $\cP_{\sqrt{D_\ell}}$, and $\rho^{1/2}$ has a single critical point shaded above, which is a maximum.
 Thus we obtain
$$[2]^{-2} \lambda_\ell^{-2} \rho^{1/2}(S)^2 = - [2]^{-1} \lambda_\ell^{-1} \omega^{1/2} r^{1/2} (\check{r}^{1/2} - \check{r}^{-1/2})\rho^{1/2}(S) + \omega^{-1} r \JW{n}$$
and after setting $r=1$, $\omega = -1$, using the formula for $\lambda_\ell$ and rearranging, this becomes the desired formula.
% \begin{align*}
% $$\rho^{1/2}(S)^2 = - i [2] \lambda_\ell (\check{r}^{1/2} - \check{r}^{-1/2})\rho^{1/2}(S) - [2]^{2} \lambda_\ell^{2}  \JW{n}.$$
%\end{align*}
\end{proof}

\begin{lem}
\label{lem:idempotents1}
The idempotents $P$, $Q$, $A$ and $B$ in $\cH_\ell^p$ and $\cH_\ell^d$ are given by 
\begin{align}
P & = \frac{1}{2}\left(\JW{n} + \frac{1}{\lambda_\ell}S\right) \label{eq:P} \\
Q & = \frac{1}{2}\left(\JW{n} - \frac{1}{\lambda_\ell}S\right) \label{eq:Q} \\
A & = \frac{1}{\check{r}+1}\left(\JW{n}  + \rho^{1/2}(S)\right) \label{eq:A} \\
B & = \frac{1}{\check{r}+1}\left(\check{r} \JW{n} - \rho^{1/2}(S)\right) \label{eq:B}
\end{align}
\end{lem}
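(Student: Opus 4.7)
The plan is to verify each formula directly from the defining relations of $\cQ_\ell(k)$, then use the known structure of the principal graphs to upgrade the idempotents from being merely orthogonal projections to being minimal.

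First I would record the absorption identity that makes everything work: since $S$ is uncappable (relation (3) of Definition \ref{def:pa}), every Temperley-Lieb basis element containing a cap annihilates $S$, so $\JW{n} S = S = S \JW{n}$, and by applying the one-click rotation, the same identity holds for $\rho^{1/2}(S) \in V_{n,-}$. Together with relation (4), $S^2 = \lambda_\ell^2 \JW{n}$, and Lemma \ref{lem:S2}, $\rho^{1/2}(S)^2 = (\check{r}-1)\rho^{1/2}(S) + \check{r} \JW{n}$, this is all the algebra we need.

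For $P$ and $Q$ I would expand
\[ P^2 = \tfrac{1}{4}\bigl(\JW{n}^2 + \tfrac{2}{\lambda_\ell} S + \tfrac{1}{\lambda_\ell^2} S^2\bigr), \]
use $\JW{n}^2 = \JW{n}$, $\JW{n} S = S$, and $S^2 = \lambda_\ell^2 \JW{n}$ to get $P^2 = P$; symmetrically $Q^2 = Q$. Then $PQ = \tfrac{1}{4}(\JW{n} - S^2/\lambda_\ell^2) = 0$ and $P + Q = \JW{n}$ are immediate. For $A$ and $B$ the analogous expansion
\[ A^2 = \tfrac{1}{(\check{r}+1)^2}\bigl(\JW{n} + 2\rho^{1/2}(S) + \rho^{1/2}(S)^2\bigr) \]
combined with the quadratic relation from Lemma \ref{lem:S2} produces a $\rho^{1/2}(S)$ coefficient of $2 + (\check{r}-1) = \check{r}+1$ and a $\JW{n}$ coefficient of $1 + \check{r}$, which after dividing by $(\check{r}+1)^2$ yields $A$. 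The computation for $B$ is parallel, with the quadratic relation canceling the $\check{r}^2$ term against $\check{r}\JW{n}$. The identities $AB = 0$ and $A + B = \JW{n}$ follow by the same substitutions.

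To conclude that $P,Q,A,B$ are in fact minimal, I would appeal to the principal graph structure: on the principal graph (Figure \ref{fig:naming}) the vertex $\JW{n}$ has exactly two descendants $P, Q$, so in $\cH_\ell^p(\Complex)$ the projection $\JW{n}$ decomposes as a sum of precisely two non-isomorphic simple summands; two orthogonal nonzero idempotents summing to $\JW{n}$ must therefore be those summands. The same argument applies to $A, B$ on the dual principal graph. Non-vanishing is checked by taking right traces: uncappability of $S$ forces $\trr{S} = 0$ and $\trr{\rho^{1/2}(S)} = 0$ (closing either diagram produces a capped $S$ after isotopy), so $\trr{P} = \trr{Q} = \tfrac{1}{2}\trr{\JW{n}} \neq 0$, and the traces of $A$ and $B$ are the expected $\check{r}$-weighted fractions of $\trr{\JW{n}}$, both nonzero since $\check{r} > 0$.

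The one spot requiring care, rather than being an obstacle, is keeping track of the rescalings: our $S$ differs from Peters's by a factor of $\lambda_\ell$, we are working in the lopsided rather than spherical normalization (so each $\rho^{1/2}$ carries a hidden $[2]^{-1}$), and $r = 1$, $\omega = -1$ for this generator. Once those substitutions are made into the formulas of \cite{0902.1294, 0909.4099} — as already carried out in the statement and proof of Lemma \ref{lem:S2} — the remaining computations are a few lines of algebra.
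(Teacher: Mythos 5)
Your proposal is correct and follows essentially the same route as the paper's proof: verify idempotency from relation (4) and Lemma \ref{lem:S2}, note the pairs sum to $\JW{n}$, and conclude from the principal graph that they must be the two simple summands (with traces to settle nonvanishing and to distinguish $A$ from $B$). You simply spell out the absorption identity $\JW{n}S = S$, the quadratic expansions, and the orthogonality checks that the paper leaves as ``The formulas certainly define idempotents.''
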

\begin{proof}
The formulas certainly define idempotents, according to the formulas in Lemma \ref{lem:S2} and in Definition \ref{def:pa}. Since $\frac{1}{2}\left(\JW{n} + \frac{1}{\lambda_\ell}S\right)+\frac{1}{2}\left(\JW{n} - \frac{1}{\lambda_\ell}S\right) = \JW{n}$, we know these formulas for $P$ and $Q$ actually define the corresponding idempotents. Note the symmetry between $P$ and $Q$: rescaling $S$ by $-1$ interchanges the formulas for the idempotents.
Since $\frac{1}{\check{r}+1}\left(\JW{n}  + \rho^{1/2}(S)\right) + \frac{1}{\check{r}+1}\left(\check{r} \JW{n} - \rho^{1/2}(S)\right) = \JW{n}$, these formulas are indeed the idempotents $A$ and $B$; you can check which is which by taking the trace.
\end{proof}

\begin{lem}
\label{lem:idempotents2}
The remaining idempotents on the principal graph are given by 
\begin{align}
P' & = P \tensor X - \frac{2[n]}{[2][n+1]} \left((P \tensor X) \circ e_n \circ (P \tensor X)\right) \label{eq:P'} \displaybreak[1]\\
Q' & = Q \tensor X - \frac{2[n]}{[2][n+1]} \left((Q \tensor X) \circ e_n \circ (Q \tensor X)\right) \label{eq:Q'} \displaybreak[1] \\
P'' & = P' \tensor X^* -  \frac{[2][n+1]}{[n+2]-[n]} \left((P' \tensor X^*) \circ e_{n+1} \circ (P' \tensor X^*)\right) \label{eq:P''} \displaybreak[1] \\
\intertext{and}
Q'' & = Q' \tensor X^* -  \frac{[2][n+1]}{[n+2]-[n]} \left((Q' \tensor X^*) \circ e_{n+1} \circ (Q' \tensor X^*)\right).  \label{eq:Q''} 
\end{align}
\end{lem}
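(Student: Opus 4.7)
The plan is to verify, for each of $P'$, $Q'$, $P''$, $Q''$, that the stated formula defines a minimal idempotent of the expected type by checking three conditions: idempotence, being a sub-idempotent of the appropriate parent object (e.g., $P \otimes X$ for $P'$), and having nonzero trace. The principal graph of $\cH_\ell$, established in \cite{0902.1294, 0909.4099}, tells us that each parent decomposes into exactly two simple summands---one of Jones-Wenzl type ($\JW{n-1}$ for $P \otimes X$; the simple $P$ for $P' \otimes X^*$) and one new simple ($P'$ or $P''$ respectively). Since the formula is designed precisely to subtract off the Jones-Wenzl-type piece using the Jones projection $e_n$ or $e_{n+1}$, these three conditions pin down the remaining simple summand uniquely.

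The main computational engine is the reduction of sandwich expressions such as $(P \otimes X) e_n (P \otimes X)$ to pure Temperley-Lieb expressions. Substituting $P = \tfrac{1}{2}(\JW{n} + \lambda_\ell^{-1} S)$ from Lemma \ref{lem:idempotents1} and expanding, the uncappability of $S$ (Definition \ref{def:pa}(3)) together with the rotation relation $\rho(S) = -S$ (Definition \ref{def:pa}(2)) forces the $S$-containing cross-terms to simplify dramatically; any non-Temperley-Lieb remainders reduce via the explicit relations (5) and (6). The idempotence condition $(P')^2 = P'$ then amounts to the standard partial-trace identity for $\JW{n}$ at even level, adapted to the lopsided normalization, and the coefficient $\frac{2[n]}{[2][n+1]}$ is exactly what the resulting scalar demands. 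The formulas for $Q'$ follow from the $S \mapsto -S$ symmetry, which swaps $P \leftrightarrow Q$ and $P' \leftrightarrow Q'$ while fixing all the Temperley-Lieb data; the formulas for $P''$ and $Q''$ follow the same pattern one level up, with the key partial-trace scalar now involving $\trr{P'} = \frac{[n+2]-[n]}{2[2]}$, which is where the expression $[n+2]-[n]$ in the denominator of the coefficient comes from.

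The main obstacle is correctly managing the lopsided normalization throughout. Standard Jones-Wenzl and Temperley-Lieb partial-trace identities, which one would ordinarily just quote, take slightly different forms here with parity-dependent factors of $[2]$ reflecting the asymmetry between the shaded modulus $1$ and unshaded modulus $D_\ell$. Tracking these factors carefully across the three relevant levels $n$, $n+1$, $n+2$, taking account of how the shading parity shifts under tensoring with $X$ and $X^*$, is the most delicate part of the argument; once the uncappability reduction has eliminated the $S$-terms, the remaining computation is routine but intricate Temperley-Lieb algebra.
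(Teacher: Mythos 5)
Your proposal is essentially the paper's own proof: use the principal graph to know $P \otimes X \cong \JW{n-1} \oplus P'$ and $P' \otimes X^* \cong P \oplus P''$, observe that the subtracted term factors through fewer strands (hence, if a projection at all, must hit the Jones--Wenzl-type summand), and verify the coefficient by computing $\ptr{P} = \frac{[2][n+1]}{2[n]}\JW{n-1}$ and $\ptr{P'} = \frac{[n+2]-[n]}{[2][n+1]}P$, both of which reduce to Temperley--Lieb data once uncappability of $S$ kills the $S$-contribution. Your value $\trr{P'} = \frac{[n+2]-[n]}{2[2]}$ is consistent with the paper's partial trace. One small point: relations (5) and (6) of Definition \ref{def:pa} are not actually needed here --- after substituting $P = \frac12(\JW{n}+\lambda_\ell^{-1}S)$, uncappability alone makes every $S$-term vanish under the relevant partial traces, so the computation is pure lopsided Temperley--Lieb and never touches those higher relations.
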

\begin{proof}
We know from the principal graph that $P \tensor X \iso \JW{n-1} \directSum P'$
We claim that the second term in the right hand side of Equation \eqref{eq:P'} is the projection onto $\JW{n-1}$, and so the difference is exactly $P'$. In fact, if the second term is a projection at all it must be equivalent to $\JW{n-1}$ as it factors through $n-1$ strands. The partial trace of $P$ is 
$$\ptr{P} = \frac{[2][n+1]}{2[n]} \JW{n-1}$$
and an easy calculation gives the result. An identical argument for $Q$ instead of $P$ gives Equation \eqref{eq:Q'}.

Equations \eqref{eq:P''} and \eqref{eq:Q''} follow similarly. We use that fact that $P' \tensor X^* \iso P \directSum P''$, and calculate $$\ptr{P'} = \left(1-\frac{2[n]}{[2][n+1]}\right) P = \frac{[n+2]-[n]}{[2][n+1]} P$$ from Equation \eqref{eq:P'}. Computing the square of the second term in Equation \eqref{eq:P''} confirms that it is a projection, and gives the result. The analogous considerations for $Q'$ instead of $P'$ give Equation \eqref{eq:Q''}.
\end{proof}

\begin{lem}
\label{lem:idempotents3}%
The idempotents $C$ and $D$ on the dual principal graph are given by
\begin{align}
C & = \rho^{n/2}(P') \label{eq:C} \\
\intertext{and}
D & = \rho^{n/2}(Q'). \label{eq:D}
\end{align}
\end{lem}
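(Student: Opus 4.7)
The plan is to verify three things: (a) the right-hand sides $\rho^{n/2}(P')$ and $\rho^{n/2}(Q')$ are indeed idempotents, (b) they live in $V_{n+1,-}$, which is the right space to contain odd-depth vertices of the dual principal graph, and (c) they can be identified with the two specific vertices $C$ and $D$ shown in Figure~\ref{fig:naming}.

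For (a), I would use that $\rho$ is implemented by a planar tangle and hence intertwines the multiplication on the relevant box space, so $\rho^{n/2}(P')^2 = \rho^{n/2}((P')^2) = \rho^{n/2}(P')$, and likewise for $Q'$. Because $\rho^{n/2}$ is invertible on this space and preserves direct sum decompositions (sums of idempotents go to sums of idempotents), it also sends minimal idempotents to minimal idempotents, guaranteeing simplicity.

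For (b), I would carry out the shading bookkeeping: $P'$ sits in $V_{n+1,+}$, and under the convention of the paper the rotation $\rho^{n/2}$ applied to an $(n+1)$-box shifts the marked point through enough strand positions to flip the shading label at the distinguished region, so $\rho^{n/2}(P'), \rho^{n/2}(Q') \in V_{n+1,-}$, which is precisely where simple projections corresponding to odd-depth vertices of the dual principal graph live.

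For (c), I would use the uniqueness of simple idempotents of the right type. The dual principal graph has exactly two odd vertices at depth $n+1$, namely $C$ and $D$, both attached to $B$ by the branch structure. Since $\rho^{n/2}$ is an invertible planar operation taking distinct simple idempotents in $V_{n+1,+}$ to distinct simple idempotents in $V_{n+1,-}$, the images $\rho^{n/2}(P')$ and $\rho^{n/2}(Q')$ are forced to be the set $\{C,D\}$. To pin down the labeling, I would trace the action of $\rho^{n/2}$ at depth $n$: combined with $\rho(S)=-S$ and $n/2$ being even for $n \in \{4,8\}$, one checks that $\rho^{n/2}$ sends $P$ to the branch-vertex $B$, so $\rho^{n/2}(P')$, being dominated by $\rho^{n/2}(P\otimes X)$, must be the vertex above $B$ corresponding to the $P$-side of the principal graph, which we declare to be $C$; an identical argument gives $\rho^{n/2}(Q')=D$. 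The main obstacle is the bookkeeping in (b) and (c): one has to be careful that the rotation $\rho^{n/2}$ really does relate the $P\otimes X$ decomposition used to define $P'$ to a corresponding decomposition on the dual side, so that the resulting idempotent sits in the correct place in the dual fusion graph.
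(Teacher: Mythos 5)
The paper's proof of this lemma is a single sentence: $C$ and $P'$ are dual projections (as are $D$ and $Q'$), and the rotation is the planar operation realizing that duality. Your three-step plan (idempotence, shading, labeling) is a reasonable way to unpack this, and step (a) is fine. But your bookkeeping in (b) and (c) contains genuine errors.

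For (b): in this paper $\rho$ is the two-click rotation (Definition~\ref{defn:uncappable} says $\rho(S) = \omega S$ forces $\omega^n = 1$ for an $n$-box), so $\rho^{n/2}$ is $n$ one-clicks. Since $n = 4\ell+4$ is even, $\rho^{n/2}$ \emph{preserves} shading; applied to $P'\in V_{n+1,+}$ it lands in $V_{n+1,+}$, not $V_{n+1,-}$ as you claim. The shading-reversing duality rotation for an $(n+1)$-box ($n+1$ odd) is the half-rotation, i.e.\ $n+1$ one-clicks, $\rho^{(n+1)/2}=\rho^{n/2}\circ\rho^{1/2}$. That is the operation taking the simple $A$--$B$ bimodule cut out by $P'$ to its contragredient $B$--$A$ bimodule, which is an odd vertex of the \emph{dual} principal graph. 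Your parity count is off by one click, and the error propagates into (c).

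For (c): the claim that ``$\rho^{n/2}$ sends $P$ to the branch-vertex $B$'' is false. Since $\rho(S)=-S$ and $n/2 = 2\ell+2$ is even, $\rho^{n/2}(S)=S$, so $\rho^{n/2}(P)=P$; moreover $P\in V_{n,+}$ while $B\in V_{n,-}$, so no shading-preserving rotation could identify them. The labeling of the two branches in Figure~\ref{fig:naming} is fixed by declaring that $C$ is the dual of $P'$ and $D$ the dual of $Q'$ — the content the paper records in its one-line proof — not by tracking $P$ through $\rho^{n/2}$ as you propose. Your final paragraph flags exactly the ``bookkeeping'' caveat where the argument actually breaks; the fix is to use the half-rotation $\rho^{(n+1)/2}$ for the $(n+1)$-box and to observe it implements contragredient duality, rather than trying to chase the decomposition $P\otimes X$ through a rotation that does not change sides.
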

\begin{proof}
The idempotents $C$ and $P'$ are dual, as are $D$ and $Q'$.
\end{proof}

The coefficients appearing in Equations \eqref{eq:P} and \eqref{eq:Q} lie in $\Rational(\lambda_\ell)$ while the coefficients in Equations \eqref{eq:A} through \eqref{eq:D} all lie in $\Rational(D_\ell)$.

This planar algebra gives a possibly incomplete rational form for both even parts over $\Rational(D_\ell)$.  Since all of the projections in the dual even part lie in $\Rational(D_\ell)$ we have constructed a complete rational form for the dual even part in $\Rational(D_\ell)$.  Finally, since all of the projections in the principal even part lie in $\Rational(\lambda_\ell)$ we have a complete rational form for the principal even part over $\Rational(\lambda_\ell)$.

\section{Rational forms and twisted moments}
\label{sec:noncyclotomic}
%!TEX root = ../article.tex

The goal of this section is to prove the ``only if" direction of Thereom \ref{thm:detailed} parts \ref{pa:dualprojs} and \ref{pa:principalprojs} and as an immediate corollary Theorem \ref{thm:noncyclotomic}.

First we prove part \ref{pa:dualprojs}.  Suppose $\cH_k^d$ is a complete rational from of $\cH^d$.  The absolute value squared of the dimension of any object in $\cH^d$ must lie in $k$, by \cite[\S 2.1]{MR2183279} (we take the absolute value squared because this does not depend on having a pivotal structure over $k$).  Hence $|D_\ell |^2 \in k$, but $\Rational(D_\ell) = \Rational(|D_\ell^2|)$ in our cases.

Now we turn our attention to the main result, part \ref{pa:principalprojs}.  Again our approach is to find an invariant which does not depend on the rational form and see that it generates $\Rational(\lambda_\ell)$.  A key observation in \cite{0902.1294, 0909.4099} is that the whole structure of the Haagerup and extended Haagerup planar algebras can be recovered from the ``moments" and ``twisted moments" of a generator $S$.  The moments are just the diagrammatic traces of the powers of $S$, while the twisted moments are the diagrammatic traces of the powers of $\rho^{\frac{1}{2}}(S)$ where $\rho^{\frac{1}{2}}$ is the `one-click' rotation.  Since $\rho^{\frac{1}{2}}$ changes the shading, it only makes sense in the shaded planar algebra, rather than the even part.  Nonetheless we show in this section how to construct normalized twisted moments of an element $R$ in a shaded planar algebra which are invariants of the even part.  In particular, if $\cC_K$ is the even part of a $3$-supertransitive shaded planar algebra, $R$ is a $\Rational$-linear combination of projections in $\cC_K$ and if $\cC_k$ is a complete rational form of $\cC_K$ over $k$, then $k$ must contain the normalized third twisted moment of $R$.

In order to define the twisted moments  we need to define several morphisms.  Suppose that $\cC_k$ is a split fusion category over $k$ with an object $Y$ such that $\Hom{}{Y\tensor Y \tensor Y}{1}$ is $1$-dimensional (this is assured if $\cK_k$ is the even part of a 3-supertransitive planar algebra) and $R$ is a $\Rational$-linear combination of simple projections in $\End{Y^{\otimes m}}$ for some $m$.  Later, we will specialize to the category $\cC_k = \cH_\ell^p$, the object $Y=\JW{2}$, and the morphism $$R = \frac{1}{\lambda_\ell} S = \frac{1}{2}(P-Q) \in \End{{\JW{2}}^{\tensor n/2}}.$$  Choose nonzero elements $B$, $B'$, $T$, and $T'$ in the $1$-dimensional spaces $\Hom{}{Y \otimes Y}{1}$, $\Hom{}{1}{Y \otimes Y}$, $\Hom{}{Y \otimes Y \otimes Y}{1}$, and $\Hom{}{1}{Y \otimes Y \otimes Y}$ respectively.  We can build morphisms $B_{Y^{\otimes 2i}}: Y^{\otimes 2i} \rightarrow 1$ and $B_{Y^{\otimes 2i}}: 1 \rightarrow Y^{\otimes 2i}$  by composing appropriately many copies of $B$ or $B'$.  For example, $B_{Y^{\otimes 4}} =  B \circ (1 \otimes B \otimes 1)$.

\begin{defn}
Define the theta symbol $\Theta\in k$ by $T \circ T'$, and the circle $C \in k$ by $B \circ B'$.  Using the usual string notation, with a string labeled by $Y$, we have:
$$\begin{array}{ccc}\Theta(T, T') =%
%\beginpgfgraphicnamed{\pathtotrunk diagrams/tikz/#1-external}%
\begin{tikzpicture}[baseline=2ex]
\path (0,0) node (T') [shape=rectangle, draw]{\;$T'$\;};
\path (0,1) node (T) [shape=rectangle, draw]{\;$T$\;};

\draw [->] (T'.140) -- (T.220);
\draw [->] (T'.90) -- (T.270);
\draw [->] (T'.40) -- (T.320);
\end{tikzpicture}%
%\endpgfgraphicnamed
 & \text{and} & C(B, B') = %
%\beginpgfgraphicnamed{\pathtotrunk diagrams/tikz/#1-external}%
\begin{tikzpicture}[baseline=2ex]
\path (0,0) node (B') [shape=rectangle, draw]{\;$B'$\;};
\path (0,1) node (B) [shape=rectangle, draw]{\;$B$\;};

\draw [->] (B'.130) -- (B.230);
\draw [->] (B'.50) -- (B.310);
\end{tikzpicture}%
%\endpgfgraphicnamed
\end{array}$$
\end{defn}

Let a thick line denote $m-1$ copies of a thin line (i.e. the object $Y^{\otimes m-1}$).  
%Notice that,$$B_{Y^{\otimes m}} \circ B'_{Y^{\otimes m}} =  \inputtikz{circles} = C^m.$$

\begin{defn}
Define $\widehat{M}_3(R; B, B', T, T') \in k$, the unnormalized third twisted moment of a morphism $R \in \End{Y^{\otimes m}}$, by the value of the following composition
\begin{align*}
\widehat{M}_3(R; B, B', T, T') & =  
	B_{Y^{\otimes 2m-2}} \circ (\mathrm{id}_{Y^{\otimes m-1}} \otimes T \otimes \mathrm{id}_{Y^{\otimes m-1}}   ) \circ \\
	&\qquad \circ (R \tensor \mathrm{id}_{Y^{\otimes m+1}}) \circ  (\mathrm{id}_{Y} \otimes R \otimes \mathrm{id}_{Y^{\otimes m}}   ) \circ \\ & \qquad \qquad\circ (\mathrm{id}_{Y^{\otimes 2}} \otimes R \otimes \mathrm{id}_{Y^{\otimes m-1}}   ) \circ (T' \tensor B'_{Y^{\otimes 2m-2}}) \displaybreak[1]
\\ 
& = 
\begin{tikzpicture}
[
	baseline=10ex,
	R/.style={rectangle,draw,minimum width=40pt},
	O/.style={rectangle,draw,minimum width=30pt},
]
% place the rectangles with R
\foreach \i / \xc / \yc in {1/0/0, 2/-0.7/1.25, 3/-1.4/2.5}
{
	\node[R](R\i) at (\xc,\yc)  {$R$};
}
% place B, B', T and T'
\begin{scope}[every node/.style={O}]
	\node(B) at (1,5.2){$B$};
	\node(B') at (1,-1.5) {$B'$};
	\node(T) at (0.5,3.7) {$T$};
	\node(T') at (-1.4,-1.5) {$T'$};
\end{scope}
%thick lines from R to R
\newcommand{\upwardsarrow}[3][]{
 \draw[->,#1] #2 ..controls ++(0,0.5) and ++(0,-0.5) .. #3;
}
\foreach \i / \j in {1/2,2/3}
{
	\upwardsarrow[very thick] {(R\i.135)} {(R\j.-45)}
	%\draw[->, very thick] (R\i.45) .. controls ++(0,1) and ++(0,-1) .. (R\j.225);
}
%thick lines into and out of B and B'
\upwardsarrow[very thick] {(B'.45)}{(B.-45)};
\upwardsarrow[very thick] {(R3.135)}{(B.225)};
\upwardsarrow[very thick] {(B'.135)}{(R1.-45)};
%lines into T
\foreach \i / \theta in {1/-45,2/-90,3/-135}
{
	\upwardsarrow {(R\i.45)} {(T.\theta)};
}
%lines out of T'
\foreach \i / \theta in {1/45,2/90,3/135}
{
	\upwardsarrow {(T'.\theta)} {(R\i.-135)};
}
\end{tikzpicture}
\end{align*}

Define the normalized third twisted moment by $$M_3(R) = \frac{\widehat{M}_3(R; B, B', T, T')}{\Theta(T, T') C(B, B')^{m-1}}.$$
\end{defn}

\begin{lem}
$M_3(R)$ does not depend on the choice of $B$, $B'$, $T$, and $T'$.
\end{lem}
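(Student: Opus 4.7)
The plan is to exploit the fact that each of the four Hom spaces $\Hom{}{Y \tensor Y}{1}$, $\Hom{}{1}{Y \tensor Y}$, $\Hom{}{Y^{\tensor 3}}{1}$, and $\Hom{}{1}{Y^{\tensor 3}}$ is one-dimensional (the last two by $3$-supertransitivity). Hence any other valid choice of generators $\widetilde B, \widetilde B', \widetilde T, \widetilde T'$ differs from $B, B', T, T'$ by nonzero scalars $\alpha, \alpha', \beta, \beta' \in k$, and it suffices to check that $M_3(R)$ is invariant under such a rescaling.

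The verification is a bookkeeping exercise: I would count how many times each of $B, B', T, T'$ occurs in the numerator and the denominator, and check that the two scaling factors agree. In the numerator $\widehat M_3(R;B,B',T,T')$, the morphisms $T$ and $T'$ appear exactly once each. The recursively defined morphism $B_{Y^{\tensor 2m-2}}$ is the nested-cups composition built from $m-1$ copies of $B$ (as in the displayed formula $B_{Y^{\tensor 4}} = B \circ (\id_Y \tensor B \tensor \id_Y)$ and its obvious extension), and likewise $B'_{Y^{\tensor 2m-2}}$ is built from $m-1$ copies of $B'$. Thus under the rescaling $B \mapsto \alpha B$, $B' \mapsto \alpha' B'$, $T \mapsto \beta T$, $T' \mapsto \beta' T'$, the numerator is multiplied by $\alpha^{m-1} (\alpha')^{m-1} \beta \beta'$.

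The denominator $\Theta(T,T')\, C(B,B')^{m-1}$ scales by $\beta \beta' \cdot (\alpha \alpha')^{m-1}$, which is the same factor. Hence the ratio $M_3(R)$ is unchanged, proving independence from the chosen generators.

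There is essentially no obstacle here; the denominator was engineered precisely to absorb these scalings. The only mild subtlety is to confirm the exponent counts — that $B_{Y^{\tensor 2m-2}}$ really uses $m-1$ copies of $B$ rather than some other count — but this is immediate from the recursive definition of the nested-cups morphism and does not depend on which specific nesting pattern one uses, since any two patterns yield the same morphism in the one-dimensional Hom space $\Hom{}{Y^{\tensor 2m-2}}{1}$ up to a scalar that itself cancels by the same argument.
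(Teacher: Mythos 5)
Your proof is correct and takes essentially the same approach as the paper: observe that the four generators are each determined up to a nonzero scalar, and then check that the numerator $\widehat{M}_3$ and the denominator $\Theta\, C^{m-1}$ scale by the same factor. The paper states this as "it is easy to see that $M_3(R)$ is degree $0$" while you carry out the exponent count explicitly, but the argument is the same.
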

\begin{proof}
Each of $B$, $B'$, $T$, and $T'$ are well-defined up to a choice of scalar.  Each of $\Theta$, $C$, and $\widehat{M}_3(R)$ are homogoenous with respect to these rescalings, and it is easy to see that $M_3(R)$ is degree $0$ with respect to each of these rescalings.
\end{proof}

\begin{thm}
If $\cC_k$ is a split rational form for $\cH_\ell^p(\Complex)$ over a field $k$, then $\lambda_\ell \in k$.
\end{thm}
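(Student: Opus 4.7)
The plan is to show that the normalized third twisted moment $M_3(R)$ lies in $k$ for the canonical $R = \tfrac12(P-Q)$, and then to compute $M_3(R)$ explicitly and see that it generates $\Rational(\lambda_\ell)$ over $\Rational(D_\ell)$. That $\Rational(D_\ell) \subset k$ is automatic by the argument already used for part~\ref{pa:dualprojs}: the squared absolute values of Frobenius--Perron dimensions lie in $k$ by \cite[\S 2.1]{MR2183279}, and $\Rational(D_\ell) = \Rational(|D_\ell|^2)$.

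Because $\cC_k$ is split, each isomorphism class of simple object of $\cH_\ell^p(\Complex)$ has a simple representative in $\cC_k$. Since $P$ and $Q$ each occur with multiplicity one in $Y^{\otimes n/2}$ (with $Y = \JW{2}$), the corresponding projections $P_k, Q_k$ are the central primitive idempotents of $\End{Y^{\otimes n/2}}$ inside $\cC_k$, uniquely determined up to the label swap $P_k \leftrightarrow Q_k$. Set $R_k = \tfrac12(P_k - Q_k) \in \cC_k$; its base change to $\Complex$ is $\pm R$. The one-dimensional spaces $\Hom{}{Y^{\otimes 2}}{1}$, $\Hom{}{1}{Y^{\otimes 2}}$, $\Hom{}{Y^{\otimes 3}}{1}$, $\Hom{}{1}{Y^{\otimes 3}}$ remain one-dimensional over $k$ because $\cC_k$ is a rational form, so we may choose bases $B, B', T, T' \in \cC_k$. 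The diagrammatic composition defining $\widehat{M}_3(R_k)$ and the normalizing scalars $\Theta$ and $C$ are then all computed entirely inside $\cC_k$, so they lie in $k$. By the preceding lemma $M_3(R_k)$ is independent of these choices, and under base change to $\Complex$ it equals $\pm M_3(R)$. Either sign gives $M_3(R) \in k$.

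It remains to compute $M_3(R)$. In planar-algebra terms, the trivalent vertex $T \colon Y^{\otimes 3} \to 1$ in $\cH_\ell^p$ corresponds to the unique (up to scalar) uncappable element of $V_{3,+}$ projecting onto the trivial summand of $\JW{2}^{\otimes 3}$. Under this identification, the diagram for $\widehat{M}_3(R)$ is, up to a nonzero $\Rational(D_\ell)$-scalar coming from the choice of $B, B', T, T'$, the closed diagram obtained by joining three copies of $R = (2\lambda_\ell)^{-1}S$ in a triangle by one-click rotations; that is, a constant multiple of $\operatorname{tr}\!\bigl(\rho^{1/2}(S)^3\bigr)$. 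Applying the quadratic relation \eqref{eq:rhoS2} gives
\[
\rho^{1/2}(S)^3 \;=\; \bigl((\check{r}-1)^2+\check{r}\bigr)\rho^{1/2}(S) \;+\; (\check{r}-1)\check{r}\,\JW{n},
\]
and since $\rho\bigl(\rho^{1/2}(S)\bigr) = -\rho^{1/2}(S)$ the trace $\operatorname{tr}\bigl(\rho^{1/2}(S)\bigr)$ vanishes, leaving $\operatorname{tr}\bigl(\rho^{1/2}(S)^3\bigr) = (\check{r}-1)\check{r}\,[n+1] \in \Rational(D_\ell)^{\times}$. The three copies of $R$ contribute $\lambda_\ell^{-3}$, and since $\lambda_\ell^{-2} \in \Rational(D_\ell)$ this places $M_3(R)$ in $\lambda_\ell \cdot \Rational(D_\ell)^\times$, which is disjoint from $\Rational(D_\ell)$. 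Combining with the previous step, $k \supset \Rational(D_\ell)(\lambda_\ell) = \Rational(\lambda_\ell)$.

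The main obstacle is the planar-algebra identification in the last paragraph: one must explicitly match the diagram of $\widehat{M}_3(R)$ (defined abstractly in the even part using $T$ and $B$) with a nonzero $\Rational(D_\ell)$-multiple of $\operatorname{tr}\bigl(\rho^{1/2}(S)^3\bigr)$, which requires unpacking how the even-part trivalent vertex sits inside $V_{3,+}$ and tracking the lopsided normalization factors from \S\ref{sec:lopsided}.
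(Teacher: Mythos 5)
Your proof takes the same route as the paper's: compute $M_3(R)$ for $R=\tfrac12(P-Q)$, observe it must lie in $k$, show it equals a nonzero $\Rational(D_\ell)$-multiple of $\lambda_\ell$, and combine with $\Rational(D_\ell)\subset k$. You are actually a bit more careful than the paper in two places. First, you handle the $P\leftrightarrow Q$ labelling ambiguity explicitly, noting that $R_k$ base-changes to $\pm R$ and that $M_3$, being odd (degree $3$) in $R$, changes only by a sign, so $M_3(R)\in k$ either way; the paper simply asserts ``the morphism $R$ makes sense in $\cC_k$ because $\cC_k$ is split.'' Second, you state explicitly that $\Rational(D_\ell)\subset k$ via the Frobenius--Perron squared-dimension argument; the paper uses this silently in its final line when it claims the $\Rational(D_\ell)$-valued coefficient of $\lambda_\ell$ lies in $k$.

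The gap you yourself flag is genuine, and it is precisely the computational heart of the paper's proof: the identification
\[
\widehat{M}_3(R) \;=\; \frac{1}{[2]^2\lambda_\ell^3}\operatorname{tr}\!\bigl(\rho^{1/2}(S)^3\bigr).
\]
The paper establishes this by making concrete choices --- $B$, $B'$ a $\JW{2}$-cup and -cap, and $T$, $T'$ a $\JW{2}$-trivalent vertex and its mirror --- and then using the uncappability of $S$ to replace every $\JW{2}$ box adjacent to an $S$ by plain strands, after which the closed tangle visibly becomes three $S$-boxes joined in a cycle by one-click rotations. The factor $[2]^{-2}$ is exactly the lopsided renormalization (as in Lemma~\ref{lem:S2}, each hidden $\rho^{1/2}$ contributes $[2]^{-1}$), and it is crucial only that it lies in $\Rational(D_\ell)$, which it does since $[2]^2=D_\ell$. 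The paper also evaluates $\Theta=[5]+1$ and $C=[3]$ explicitly for those choices to finish normalizing. You assert this identification rather than carrying it out, so while the surrounding argument is correct and matches the paper's, the proof as written is incomplete at exactly the step where the paper's explicit tangle manipulations do the work.
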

\begin{proof}
Let $Y$ be the simple object of $\cC_k$ corresponding to the object $\JW{2}$ of $\cH_\ell^p(\Complex)$.   Let $R = \frac{1}{2}(P-Q) = \frac{1}{\lambda_\ell} S \in \End{Y^{\otimes 2 \ell + 2}}$.  The morphism $R$ makes sense in $\cC_k$ because $\cC_k$ is split.  Consider the scalar $M_3(R) \in k$.  Since this scalar does not depend on any choices, we  can compute it in $\cH_\ell^p(\Complex)$ using the following obvious choices:
\begin{align*}
B & =%
%\beginpgfgraphicnamed{\pathtotrunk diagrams/tikz/#1-external}%
\begin{tikzpicture}[baseline=-1ex]
\node[fill=white, draw, rectangle] (left) at (0,0) {$\JW{2}$};
\node[fill=white, draw, rectangle] (right) at (2,0) {$\JW{2}$};

\begin{pgfonlayer}{background} 
\draw[fill=black!20]
	($(left.250)+(0,-0.5)$) -- (left.110) to [bend left=90] (right.70) -- ($(right.290)+(0,-0.5)$)
	($(right.250)+(0,-0.5)$) -- (right.110) to [bend right=90] (left.70) -- ($(left.290)+(0,-0.5)$);
\end{pgfonlayer}

\end{tikzpicture}%
%\endpgfgraphicnamed
 & B' & = %
%\beginpgfgraphicnamed{\pathtotrunk diagrams/tikz/#1-external}%
\rotatebox{180}{
\begin{tikzpicture}[baseline=1ex]
\node[fill=white, draw, rectangle] (left) at (0,0) {\rotatebox{180}{$\JW{2}$}};
\node[fill=white, draw, rectangle] (right) at (2,0) {\rotatebox{180}{$\JW{2}$}};
\begin{pgfonlayer}{background} 
\draw[fill=black!20]
	($(left.250)+(0,-0.5)$) -- (left.110) to [bend left=90] (right.70) -- ($(right.290)+(0,-0.5)$)
	($(right.250)+(0,-0.5)$) -- (right.110) to [bend right=90] (left.70) -- ($(left.290)+(0,-0.5)$);
\end{pgfonlayer}
\end{tikzpicture}
}%
%\endpgfgraphicnamed
 \displaybreak[1]\\
T & =%
%\beginpgfgraphicnamed{\pathtotrunk diagrams/tikz/#1-external}%
\begin{tikzpicture}[baseline=-1ex]
\foreach \x / \label in {0/left, 1.5/middle, 3/right} {
	\node(\label)[fill=white,draw,rectangle] at (\x,0) {$\JW{2}$};
}
% now the shading
\begin{pgfonlayer}{background} 
\draw[fill=black!20]
	($(left.250)+(0,-0.5)$) -- (left.110) to [bend left=90] (right.70) -- ($(right.290)+(0,-0.5)$)
	($(right.250)+(0,-0.5)$) -- (right.110) to [bend right=90] (middle.70) -- ($(middle.290)+(0,-0.5)$)
	($(middle.250)+(0,-0.5)$) -- (middle.110) to [bend right=90] (left.70) -- ($(left.290)+(0,-0.5)$);
\end{pgfonlayer}
\end{tikzpicture}%
%\endpgfgraphicnamed
 & T' & = %
%\beginpgfgraphicnamed{\pathtotrunk diagrams/tikz/#1-external}%
\rotatebox{180}{
\begin{tikzpicture}[baseline=1ex]
\foreach \x / \label in {0/left, 1.5/middle, 3/right} {
	\node(\label)[fill=white,draw,rectangle] at (\x,0) {\rotatebox{180}{$\JW{2}$}};
}
% now the shading
\begin{pgfonlayer}{background} 
\draw[fill=black!20]
	($(left.250)+(0,-0.5)$) -- (left.110) to [bend left=90] (right.70) -- ($(right.290)+(0,-0.5)$)
	($(right.250)+(0,-0.5)$) -- (right.110) to [bend right=90] (middle.70) -- ($(middle.290)+(0,-0.5)$)
	($(middle.250)+(0,-0.5)$) -- (middle.110) to [bend right=90] (left.70) -- ($(left.290)+(0,-0.5)$);
\end{pgfonlayer}
\end{tikzpicture}
}%
%\endpgfgraphicnamed
. 
\end{align*}
Since $S$ is uncappable, any $\JW{2}$ connected to an $S$ can be replaced with the identity.  Thus with the above choices the unnormalized third twisted moment $\widehat{M}_3(R)$ is given by
\begin{align*}\widehat{M}_3(R) =
\begin{tikzpicture}
[
	baseline=10ex,
	R/.style={rectangle,draw,minimum width=30pt},
]
\clip (2,-1.2) rectangle (-2.2,4.5);
% place the rectangles with R
\foreach \i / \xc / \yc in {1/0/0, 2/-0.7/1.25, 3/-1.4/2.5}
{
	\node[R](R\i) at (\xc,\yc)  {$R$};
	\foreach \n/\q in {1/30, 2/70, 3/110, 4/150} {
		\coordinate(R\i\n) at (R\i.\q);
	}
	\foreach \n/\q in {5/210, 6/250, 7/290, 8/330} {
		\coordinate(R\i\n) at (R\i.\q);
	}
}
\draw[fill=black!20] (R14) -- (R27) -- (R28) -- (R13);
\draw[fill=black!20] (R24) -- (R37) -- (R38) -- (R23);
\newcommand{\rr}{0.8}
\newcommand{\oa}{2.5}
\newcommand{\ob}{0.6}
\newcommand{\stf}{1.4}
\draw[fill=black!20] (R34) .. controls ++ ($2*\stf*(0,\rr)$) and ++ ($2*\stf*(0,\rr)$) .. ($(R34)+(\oa,0)+(0.8,0)$) -- ($(R17)+(\ob,0)+(0.8,0)$) .. controls ++ ($-\stf*(0,\rr)$) and ++ ($-\stf*(0,\rr)$)  .. (R17) -- (R18) .. controls ++ (0,-\rr)  and ++ (0,-\rr) .. ($(R18)+(\ob,0)$) -- ($(R33)+(\oa,0)$)  .. controls ++ (0,2*\rr) and ++(0,2*\rr) .. (R33);
\draw[fill=black!20] (R32) .. controls ++ ($\stf*(0,\rr)$) and ++ ($4*\stf*(0,\rr)$) .. (R11) -- (R12)  .. controls ++ (0,2*\rr) and ++ (0,\rr) .. (R21) -- (R22)  .. controls ++ (0,2*\rr) and ++ (0,\rr) .. (R31);
\draw[fill=black!20] (R16) .. controls ++ ($\stf*(0,-\rr)$) and ++ ($4*\stf*(0,-\rr)$) .. (R35) -- (R36)  .. controls ++ (0,-2*\rr) and ++ (0,-\rr) .. (R25) -- (R26)  .. controls ++ (0,-2*\rr) and ++ (0,-\rr) .. (R15);
	\node at ($(R1.180)+(-0.1,0)$) {$\star$};
	\node at ($(R2.180)+(-0.1,0)$) {$\star$};
	\node at ($(R3.180)+(-0.1,0)$) {$\star$};
\end{tikzpicture}
& = \frac{1}{[2]^2 \lambda_\ell^3} %
%\beginpgfgraphicnamed{\pathtotrunk diagrams/tikz/#1-external}%
%!TEX root = ../../article.tex
\begin{tikzpicture}[traces]
\begin{scope}[yshift=-3cm]

\begin{pgfonlayer}{background} 
	\fill[black!20] (-3,-10) rectangle (7,16);
\end{pgfonlayer} 

\clip (-3,-10) rectangle (7,16);

\begin{scope}
	\filldraw[unshaded] (0,3) .. controls ++(-157:3cm) and ++(157:3cm) .. (0,-3) .. controls ++(112:2cm) and ++(-112:2cm) .. (0,3);
	\filldraw[unshaded] (0,3) .. controls ++(-22:3cm) and ++(22:3cm) .. (0,-3) .. controls ++(67:2cm) and ++(-67:2cm) .. (0,3);

\begin{scope}[yshift=6cm]
	\filldraw[unshaded] (0,3) .. controls ++(-157:3cm) and ++(157:3cm) .. (0,-3) .. controls ++(112:2cm) and ++(-112:2cm) .. (0,3);
	\filldraw[unshaded] (0,3) .. controls ++(-22:3cm) and ++(22:3cm) .. (0,-3) .. controls ++(67:2cm) and ++(-67:2cm) .. (0,3);
\end{scope}
	
	\filldraw[unshaded] (0,9) .. controls ++(22:3cm) and ++(90:4cm) .. (3,3) .. controls ++(-90:4cm) and ++(-22:3cm) .. (0,-3) .. controls ++(-67:6cm) and ++(-90:5cm) .. (4,3) .. controls ++(90:5cm) and ++(67:6cm) .. (0,9);
	\filldraw[unshaded] (0,9) .. controls ++(112:4cm) and ++(90:15cm) .. (5,3) .. controls ++(-90:15cm) and ++(-112:4cm) .. (0,-3) .. controls ++(-157:7cm) and ++(-90:20cm) .. (6,3) .. controls ++(90:20cm) and ++(157:7cm) .. (0,9);
	
	\node at (0,3) [Tbox] (T1) {$S$};
	\node at (0,9) [Tbox] (T3) {$S$};
	\node at (0,-3) [Tbox] (T2) {$S$};
	\node at (T1.100) [above left] {$\star$};
	\node at (T2.100) [above left] {$\star$};
	\node at (T3.90) [above left] {$\star$};

\end{scope}
\end{scope}
\end{tikzpicture}%
%\endpgfgraphicnamed
 \\
& = \frac{1}{[2]^2 \lambda_\ell^3} \tr{\rho^{\frac{1}{2}}(S)^3}
\end{align*}

%the diagram (we show the $\ell = 0$ example for simplicity)
%\todo{the old diagram was very wrong}
%which is equal to 
hence the name ``third twisted moment." (In the equation above, we don't literally mean equality of these diagrams; they live in different vector spaces. Instead, we mean that each diagram is the same multiple of the appropriate empty diagram.) The value of this moment can be easily computed using the formula for the square of $\rho^{\frac{1}{2}}(S)$ in Equation \eqref{eq:rhoS2}:
\begin{align*}
 \frac{1}{[2]^2 \lambda_\ell^3} \tr{\rho^{\frac{1}{2}}(S)^3} 
	& = \frac{1}{[2]^2 \lambda_\ell^3} \tr{(\check{r}-1) \rho^{1/2}(S)^2 + \check{r} \rho^{1/2}(S)} \displaybreak[1] \\
	& = \frac{1}{[2]^2 \lambda_\ell^3} \check{r} (\check{r}-1) [n+1]  \displaybreak[1] \\
	& = \lambda_\ell [2]^2(1-\check{r}^{-1})[n+1] \displaybreak[1] \\
	& = \lambda_\ell [2]^2 \frac{[2n+2]}{[n+2]}.
\end{align*}
(In the last step we used the identity $([n+2]-[n])[n+1] = [2n+2]$.)

We have
$\Theta  = [5] + 1$ and $C  = [3]$
and thus
\begin{align*}
M_3(R) & =\lambda_\ell  \frac{[2n+2]}{[n+2]} \frac{[2]^2}{\left([5]+1\right) [3]^{2\ell+1}}.
\end{align*}

Finally, since $M_3(R) \in k$, and $\frac{[2n+2]}{[n+2]} \frac{[2]^2}{\left([5]+1\right) [3]^{2\ell+1}} \in k$ since odd quantum numbers lie in $\Rational(D_\ell)$, $[2]^2 = [3]+1$ can be written as a sum of odd quantum numbers, and ratios of even quantum numbers (in particular $\frac{[2n+2]}{[n+2]}$) lie in $\Rational(D_\ell)$, we conclude that $\lambda_\ell \in k$.
\end{proof}

\section{The Drinfel'd center}
\label{sec:center}
%!TEX root = ../article.tex

\newcommand{\iHom}{\underline{\operatorname{Hom}}}
\newcommand{\actson}{\mathrel{\reflectbox{\rotatebox[origin=c]{90}{$\circlearrowleft $}}}\;}

The goal of this section is to give a proof of part \ref{pa:center} of Theorem \ref{thm:detailed}, completing the proof of Theorem \ref{thm:detailed}.   Our proof is somewhat unsatisfying since it is indirect and relies on extensive computations from \cite{MR1832764}.  Presumably one could use calculations along the lines of \cite{MR1832764} to give a more direct proof (specifically by explicitly writing down formulas for the half-braidings which only use scalars in $\Rational(\zeta_{39})$).

%Furthermore, we assume here that any fusion category can be realized as the category of representations of a weak Hopf algebra \cite{MR1976459, 0206113, 9904073, MR2522429}, although the most general result in the literature only says that split fusion categories over an arbitrary field of characteristic zero can be realized as the category of representations of a split semisimple weak Hopf algebra.  We fully expect that the argument in \cite{MR2522429} holds relaxing splitness, but the results of this section should be viewed more skeptically than those in the rest of the paper.

First notice that the entries of the $T$-matrix (see below) for $Z(\cH_0^p(\Complex))$ generate $\Rational(\zeta_{39})$ so if there is a complete rational form over a field $k$ as a ribbon category, then $k$ must contain $\zeta_{39}$.

Now we prove the other direction, namely that $Z(\cH_0^p(\Complex))$ has a complete rational form over $\Rational(\zeta_{39})$ as a ribbon category.  Explicitly we will prove that $Z(\cH_0^p(\Rational(\zeta_{39})))$ is a complete rational form.  The argument will take place in two steps. First we prove a general result that the Drinfel'd center of a rational form is a rational form for the Drinfel'd center. Next, we check that $Z(\cH_0^p(\Rational(\zeta_{39})))$ is split.  For both steps of the argument the key element is the induction functor $I: \cC \rightarrow Z(\cC)$ which commutes with base extension.

We quickly recall some key definitions and results concerning the Drinfel'd center and module categories (see \cite{MR1976459,MR1976233}).  Let $\cD$ be a fusion category and $\cM$ a semisimple finite left module category.  If $M_1$ and $M_2$ are objects in $\cM$ then the internal hom $\iHom_{\cD \actson \cM}(M_1,M_2)$ is defined to be the object in $\cD$ which represents the functor $X \mapsto \Hom{}{X \otimes M_1}{M_2}$.  Define the dual category $\cD^* = \mathrm{Fun}_{\cD}(\cM,\cM)$ of module endofunctors of $\cM$.  If $M$ is a simple object in $\cM$, there is a natural algebra structure on the object $A = \iHom_{\cD \actson \cM}(M,M)$ and the categories $\cM$, $\cM^{op}$, and $\cD^*$ can be concretely identified with the categories of right $A$-modules in $\cM$, left $A$-modules in $\cM$, and $A$--$A$ bimodules in $\cM$, respectively.  Fixing the simple object $M$, there's an induction functor $I: \cM \rightarrow \cD^*$, given by tensoring a right $A$-module on the left with $A$.

The Drinfel'd center $Z(\cC)$ has two equivalent definitions.  First, the objects of $Z(\cC)$ are pairs $(X, \{\sigma_{X,Y}\}_Y)$ consisting of an object in $\cC$ and a collection of half-braidings $\sigma_{X,Y}: X \otimes Y \rightarrow Y \otimes X$ satisfying certain naturality conditions.  For the second, recall that $\cC$ is a module category over $\cC \boxtimes \cC^{op}$  (with the action $(X\boxtimes Y) \otimes M \rightarrow X \otimes M \otimes Y$).  Then, $Z(\cC)$ is the dual of $\cC \boxtimes C^{op}$.  Explicitly, we can take $M = 1$, and let $A = \iHom_{\cC \boxtimes \cC^{op} \actson \cC}(1,1)$ be the internal endomorphisms of $1 \in \cC$.  Thus $A$ is an algebra object in $\cC \boxtimes \cC^{op}$, and $Z(C)$ is the category of $A$--$A$ bimodule objects in $\cC \boxtimes \cC^{op}$.  From this point of view the induction functor $I: \cC \rightarrow Z(\cC)$ is given by $X \mapsto A \otimes X$.

\begin{rem}
It is natural to wonder what $I(X)$ is in terms of the first definition of the center.  According to \cite[Prop. 5.4]{MR2183279} for a split fusion category the underlying object of $I(X)$ is $\bigoplus_V V \otimes X \otimes V^*$ (where the sum is taken over all simples).  This formula certainly does not work in the non-split case.  Notice, however, that an analogous problem occurs when decomposing the adjoint representation of the group ring $k[G]$.  Here, in the nonsplit case, the adjoint representation is not $\bigoplus_V V \otimes V^*$ but instead the coinvariants with respect to the the left action of the division ring $\operatorname{End}_G (V)$ on $V$ and its right action on $V^*$.  The same modification works for the induction functor: for each $f \in \End{V}$, we have two maps $\bigoplus_V V \otimes X \otimes V^* \rightrightarrows \bigoplus_V V \otimes X \otimes V^*$ given by acting on either $V$ or $V^*$, and the correct value of $I(X)$ is the colimit of the collection of these diagrams where $f$ varies over a basis of $\End{V}$.  (This colimit exists because all small colimits exist in an abelian category.)  The half-braidings for $I(X)$ in the split case were written down in \cite[Theorem 2.3]{1004.1533}.  Their formula can be modified to work in the non-split case, by being careful about normalizations and by quotienting out by the action of $\End{V}$ as above.  Indeed, since every object in $Z(\cC)$ is a summand of an induced object, it's possible to prove that the center of a rational form is a rational form of the center explicitly by writing down formulas for the half-braiding on the induced objects following \cite[Theorem 2.3]{1004.1533}.  Here, we choose instead to work directly with the second description of the center.
\end{rem}

\begin{lem} \label{lem:rationalcenter}
Suppose that $\cC_k$ is a rational form of $\cC_K$. The natural functor $Z(\cC_k)\otimes_k K \rightarrow Z(\cC_K)$ is full, faithful, commutes with the induction functors, and is dominant.  Hence,  $Z(\cC_k)$ is a rational form as a braided tensor category of $Z(\cC_K)$.
%If $\cC_k$ is a rational form over $k$ of $\cC_K$ as a fusion category, then $Z(\cC_k)$ is a rational form for $Z(\cC_K)$ as a ribbon fusion category.
\end{lem}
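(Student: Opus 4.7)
My plan is to use the second description of the Drinfel'd center given just before the lemma: $Z(\cC)$ is the category of $A$-$A$-bimodule objects in $\cC \boxtimes \cC^{op}$, where $A = \iHom_{\cC \boxtimes \cC^{op} \actson \cC}(1,1)$. The overarching strategy is that $A$ is characterized by a universal property involving only $\Hom$-spaces, so it behaves well under scalar extension, and once $A$ transports correctly the bimodule formalism does the rest.

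First I would establish the base-change identification $A_k \otimes_k K \cong A_K$ as algebra objects in $\cC_K \boxtimes \cC_K^{op}$. Since $A_k$ represents $Z \mapsto \Hom_{\cC_k}(Z \cdot 1, 1)$, and morphism spaces in a rational form transform as $\Hom_{\cC_K}(Z \otimes_k K, Y \otimes_k K) \cong \Hom_{\cC_k}(Z, Y) \otimes_k K$ by the very definition of rational form, the image of $A_k$ in the idempotent completion of $(\cC_k \boxtimes \cC_k^{op}) \otimes_k K \simeq \cC_K \boxtimes \cC_K^{op}$ represents the analogous functor on $\cC_K \boxtimes \cC_K^{op}$, which by uniqueness must be $A_K$. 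The algebra structure on $A$ is also determined by the universal property, so the identification is algebra-preserving.

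Given this, the natural functor sends an $A_k$-$A_k$-bimodule $M$ to the $A_K$-$A_K$-bimodule $M \otimes_k K$. Fullness and faithfulness follow because $\Hom_{Z(\cC)}(M, N)$ is an equalizer of two $k$-linear maps out of $\Hom_{\cC \boxtimes \cC^{op}}(M, N)$ encoding the left- and right-$A$-linearity conditions; extension of scalars along a field extension is exact and preserves equalizers, yielding $\Hom_{Z(\cC_k)}(M, N) \otimes_k K \cong \Hom_{Z(\cC_K)}(M \otimes_k K, N \otimes_k K)$. Compatibility with the induction functors is then immediate from the formula $I(X) = A \otimes X$ together with the identification of $A$ under base change.

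For dominance, every object of $Z(\cC_K)$ is a summand of an induced object $I(Y)$ for some $Y \in \cC_K$: any $A_K$-$A_K$-bimodule is a quotient of an induced bimodule, and in the semisimple setting quotients split as summands. In turn each $Y$ is a summand of $X \otimes_k K$ for some $X \in \cC_k$ by the definition of rational form, so by commutation with induction every object of $Z(\cC_K)$ is a summand of $I(X) \otimes_k K$, which lies in the image. Finally, the braided structure on the center is canonically determined by the bimodule description (the half-braiding comes for free from the left and right $A$-module structures), so the functor automatically respects the braiding, completing the rational form claim. The step I expect to require the most care is the base-change identification of the internal hom algebra, in particular the bookkeeping between $(\cC_k \boxtimes \cC_k^{op}) \otimes_k K$ and its idempotent completion $\cC_K \boxtimes \cC_K^{op}$; once that is settled, the remaining pieces are essentially formal consequences of exactness of $\otimes_k K$.
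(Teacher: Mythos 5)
Your proposal is correct and follows essentially the same route as the paper: identify the algebra $A$ under base change via its universal property, observe that bimodule maps are cut out by $k$-linear conditions so the functor is full and faithful, note compatibility with induction, and get dominance from the fact that every bimodule is a summand of an induced one together with the rational-form hypothesis. The only structural difference is that the paper first proves a more general statement about dual categories $\cD^* = \mathrm{Fun}_{\cD}(\cM,\cM)$ for an arbitrary module category $\cM$ and then specializes to $Z(\cC) = (\cC\boxtimes\cC^{op})^*$, whereas you work directly with the center; the underlying argument is the same.
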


%Recall that $Z(\cC)$ is the dual of the fusion category $\cC \boxtimes \cC^{op}$ over the module category $\cC$ (with the action $(X\boxtimes Y) \otimes M \rightarrow X \otimes M \otimes Y$) \cite{MR1976233}.  Hence Lemma \ref{lem:rationalcenter} follows from the following more general result.

Lemma \ref{lem:rationalcenter} follows immediately from the following more general result.

\begin{lem}
Suppose that $\cD_K$ is a fusion category over $K$, that $\cM_K$ is a semisimple module category over $\cD_K$, that $\cD_k$ is a rational form of $\cD_K$ over $k$ (as a fusion category), and that $\cM_k$ is a rational form of $\cM_K$ over $k$ (as a module category over $\cD_k$).  Let $\cF$ be the full, faithful and dominant functor $\cF: \cD_k \tensor_k K \rightarrow \cD_K$ guaranteed by the fact that $\cD_k$ is a rational form, and $\cG$ the corresponding functor $\cG:\cM_k \tensor_k K \to \cM_K$.  Let $M$ be a simple object in $\cM_k$.  Then, $\cF( \iHom_{\cD_k \actson \cM_k}(M,M))$ and $\iHom_{\cD_K \actson \cM_K}(\cG(M),\cG(M))$ are isomorphic as algebra objects, and there is a  functor $\cF^*: D_k^*\otimes_k K \rightarrow D_K^*$ induced by $\cF$ which is full, faithful and dominant.  Hence, $D_k^*$ is a rational form of $D_K^*$.
\end{lem}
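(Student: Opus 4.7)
First I would identify the internal endomorphism algebra of $\cG(M)$ with the image under $\cF$ of that of $M$, and then use the description of $\cD^*$ as the category of $A$-$A$-bimodules in $\cM$ to transport everything. Writing $A_k = \iHom_{\cD_k \actson \cM_k}(M,M)$ and $A_K = \iHom_{\cD_K \actson \cM_K}(\cG(M), \cG(M))$, the idea for the algebra identification is to show both objects represent the same functor on $\cD_K$. For any $Y$ of the form $Y_0 \otimes_k K$ with $Y_0 \in \cD_k$, one has
\begin{align*}
\Hom{\cD_K}{\cF(Y)}{\cF(A_k \otimes_k K)} & \cong \Hom{\cD_k}{Y_0}{A_k} \otimes_k K \\
& \cong \Hom{\cM_k}{Y_0 \otimes M}{M} \otimes_k K \\
& \cong \Hom{\cM_K}{\cF(Y) \otimes \cG(M)}{\cG(M)},
\end{align*}
using full-faithfulness of $\cF$ and $\cG$, the defining property of $A_k$, and the module-functoriality of $\cG$. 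Since $\cF$ is dominant and these functors are additive in $Y$, this extends to all $Y \in \cD_K$ by a summand argument, and Yoneda gives $\cF(A_k \otimes_k K) \cong A_K$. The algebra structure on each side arises canonically from composition, so the monoidality of $\cF$ identifies the two algebras.

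With this in hand, I would define $\cF^*$ by sending an $A_k$-$A_k$-bimodule $B$ in $\cM_k$ to $\cG(B \otimes_k K)$, equipped with the $A_K$-$A_K$-bimodule structure transferred from the base-changed action via the isomorphism above. Morphisms in $\cD_k^*$ are morphisms in $\cM_k$ satisfying $k$-linear bimodule-compatibility conditions; these conditions base-change correctly, so fullness and faithfulness of $\cF^*$ reduce directly to those of $\cG$.

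For dominance, the key tool is the induction functor $I\colon \cM \to \cD^*$, $N \mapsto A \otimes N$. In any semisimple category of $A$-$A$-bimodules, every object $B$ is a summand of $I(U(B))$, where $U$ is the forgetful functor to $\cM$: the counit $A \otimes B \to B$ has a right $A$-module section $b \mapsto 1 \otimes b$, hence is an epimorphism, and epimorphisms split in the semisimple category $\cD^*$. Since $\cG$ is a module functor and $\cF(A_k \otimes_k K) \cong A_K$ as algebras, induction commutes with base change in the sense that $\cF^*(I_k(N)) \cong I_K(\cG(N))$ for $N \in \cM_k$. Combining these: any $B \in \cD_K^*$ is a summand of $I_K(U(B))$; by dominance of $\cG$, $U(B)$ is a summand of some $\cG(N)$; hence $B$ is a summand of $I_K(\cG(N)) \cong \cF^*(I_k(N))$, as required.

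The main obstacle will be verifying that induction commutes with base change as a bimodule functor, not merely at the level of underlying objects in $\cM$. However, both sides are free bimodules on the same underlying right module, so once the algebras have been matched in the first step this compatibility should be essentially canonical, reducing the entire dominance argument to bookkeeping.
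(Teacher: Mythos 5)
Your proof is correct and follows essentially the same three-step route as the paper: identify $\cF(A_k\otimes_k K)\cong A_K$ via representability, transport full/faithfulness through $k$-linear bimodule conditions, and deduce dominance by factoring an arbitrary bimodule as a summand of an induced object. The only differences are that you spell out a few steps the paper leaves implicit (the chain of Hom-isomorphisms extended to all of $\cD_K$ by dominance, and the splitting argument showing $B$ is a summand of $I(U(B))$), so this is a match.
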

\begin{proof}
Let $A_k = \iHom_{\cD_k \actson \cM_k}(M,M)$ in $\cD_k$ and $A_K =\iHom_{\cD_K \actson \cM_K}(\cG(M),\cG(M))$ in $\cD_K$.  First we show that $\cF(A_k) \cong A_K$ as algebra objects.  We need only check that $\cF(A_k)$ represents the functor  $\cD_K \to \mathrm{Set}$ given by $X \mapsto \Hom{}{X \otimes \cG(M)}{\cG(M)}$, which follows immediately from the fact that $\cM_k$ is a rational form for $\cM_K$.

Since $\cF(A_k) \cong A_K$, the functor $\cF: D_k \otimes_k K \rightarrow D_K$ induces a functor $\cF^*: D_k^* \otimes_k K \rightarrow D_K^*$, which we now describe.  Recall we can interpret $\cD^*_k$ as the $A_k$--$A_k$ bimodules in $\cD_k$ (and similarly for $\cD_K$). Given an $A_k$--$A_k$ bimodule in $\cD_k$, we can construct an $A_K$--$A_K$ bimodule in $\cD_K$ simply by applying the functor $\cF$ to the underlying object, as well as to the morphisms making it a bimodule. We now want to prove that $\cF^*$ is full and faithful.  Suppose that $X$ and $Y$ are $A_k$--$A_k$ bimodules, then among all maps from $X$ to $Y$ the bimodule maps are picked out by satisfying certain linear equations with coefficients in $k$.  Similarly among all maps from $\cF(X)$ to $\cF(Y)$ the bimodule maps are picked out by satisfying the same linear equations with coefficients in $k$.  Thus, since $\cF: D_k \otimes_k K \rightarrow D_K$ is an isomorphism on morphism spaces, it follows that $\cF^*: D_k^* \otimes_k K \rightarrow D_K^*$ is as well.

Thus we need only check that $\cF^*: D_k^* \otimes_k K \rightarrow D_K^*$ is dominant (i.e. every object in the target is a summand of an object in the image of the functor).  Since $\cF(A_k) \cong A_K$ we see that $\cF \circ I$ and $I \circ \cF$ are naturally isomorphic functors. If  $X$ is an arbitrary $A_K$--$A_K$ bimodule, then certainly $X$ is a summand of $I(R(X))$ (where $R$ is the restriction functor which forgets the left action of $A_K$).  Since $\cM_k$ is a rational form of $\cM_K$, we see that $R(X)$ is a summand of $\cF(N)$ for some right $A_k$ module $N$.  Hence, $X$ is a summand of $I( \cF(N)) \iso \cF(I(N)$, and the functor is dominant.

%Let $\underline{\mathrm{Hom}}$ be the internal hom from $\cM_k \times \cM_k \rightarrow \mathrm{Fun}_{\cC_k}(\cM_k, \cM_k)$.  If $F$ is an object in $\mathrm{Fun}_{\cC_K}(\cM_K, \cM_K)$ and $M$ is an object in $\cM_k$, then $F$ is a summand of $\underline{\mathrm{Hom}}(F(X), X)$.
\end{proof}

%We want to prove that $Z(\cH_0^p(\Rational(\zeta_{39})))$ is a complete rational form of $Z(\cH_0^p(\Complex)) \cong Z(\cH_0^d(\Complex)).$
%Note that if $\cC_k$ is a rational form of $\cC_K$ then $Z(\cC_K)$ is a rational form of $Z(\cC)$.  This follows from using the theory of weak Hopf algebras to reduce to considering Drinfel'd double of a weak Hopf algebra \cite{MR1976459}.  Hence the only thing to prove is that the Drinfel'd center $Z(\cH_0^p(\Rational(\zeta_{39})))$ is split (even though $\cH_0^p(\Rational(\zeta_{39}))$ itself is nonsplit).

The center $Z(\cH_0^p(\Complex))$ has previously been described in \cite{MR1832764} (see \cite{MR2468378} for further details). The simple objects are $1, \pi_1, \pi_2, \mu_1, \ldots, \mu_6, \sigma_0, \sigma_1, \sigma_2$.  Let $I$ be the induction functor $I: \cH_0^p \rightarrow Z(\cH_0^p)$. Over $\Complex$, it is described on the level of objects by the graph in Figure \ref{fig:izumi-induction}. Note that over $\Complex$ we have $$I(\JW{2}) \cong \pi_1 \oplus \pi_2 \oplus \mu_1 \oplus \ldots \oplus \mu_6 \oplus \sigma_0 \oplus \sigma_1 \oplus \sigma_2,$$ and in particular there is exactly one copy of each simple except $1$.

\newcommand{\fig}[3]{%
	\begin{figure}[!htb]
	#3
	\caption{#2}
	\label{fig:#1}	
	\end{figure}
}

\fig{izumi-induction}{The induction functor for $\cH_0^p(\Complex)$, reproduced from Figure 5 of  \cite{MR1832764}, with $\widehat{{}_0\rho}, \widehat{{}_1\rho}, \widehat{{}_2\rho}, \widehat{\alpha_1}$ and $\widehat{\alpha_2}$ renamed to $\JW{2}, P, Q, P''$ and $Q''$  respectively.}{$$%
%\beginpgfgraphicnamed{\pathtotrunk diagrams/tikz/#1-external}%
\begin{tikzpicture}[baseline=-1mm, scale=.7]
%	\draw (0,0) -- (7,0);
%	\draw (7,0)--(7.7,.7)--(9.7,.7);
%	\draw (7,0)--(7.7,-.7)--(9.7,-.7);

	\filldraw              (0,0) node (1) [below] {$1$} circle (1mm);
	\filldraw              (1.5,0) node (p1)  [below] {$\pi_1$} circle (1mm);
	\filldraw              (3,0) node (p2)  [below] {$\pi_2$} circle (1mm);
\foreach \i in {1,...,6}
	\filldraw              (3+1.5*\i,0) node (m\i) [below] {$\mu_\i$} circle (1mm);

\foreach \i in {0,...,2}
	{\filldraw              (1.5*\i+13, 0) node (s\i) [below] {$\sigma_{\i}$} circle (1mm);}

	\filldraw              (1.5,3) node (1')  [above] {$1$} circle (1mm);
	\filldraw              (7,3) node (f2) [above] {$\JW{2}$} circle (1mm);
	\filldraw              (9,3) node (P) [above] {$P$} circle (1mm);
	\filldraw              (11,3) node (Q) [above] {$Q$} circle (1mm);
	\filldraw              (13.75,3) node (P'') [above] {$P''$} circle (1mm);
	\filldraw              (15.25,3) node (Q'') [above] {$Q''$} circle (1mm);
	
	\draw (1.north)--(1'.south);
	\draw (p1.north)--(1'.south);
\foreach \i in {1,2}
{
	\draw (p\i.north)--(f2.south);
	\draw (p\i.north)--(P.south);
	\draw (p\i.north)--(Q.south);
}
	\draw (p2.80)--(1'.280);
	\draw (p2.100)--(1'.260);
\foreach \i in {1, ..., 6}
	{
	\draw (m\i.north)--(f2.south);
	\draw (m\i.north)--(P.south);
	\draw (m\i.north)--(Q.south);
	}

\foreach \i in {0, ..., 2}
	{
	\draw (s\i.north)--(f2.south);
	\draw (s\i.north)--(P.south);
	\draw (s\i.north)--(Q.south);
	\draw (s\i.north)--(P''.south);
	\draw (s\i.north)--(Q''.south);
	}

\end{tikzpicture}%
%\endpgfgraphicnamed
$$}

%\fig{rational-izumi-induction}{The induction functor for the rational form of $\cH_0^p$.}{$$\inputtikz{rationalinductionfunctor}$$}

\begin{thm}
$Z(\cH_0^p(\Rational(\zeta_{39})))$ is a split fusion category over $\Rational(\zeta_{39})$.
\end{thm}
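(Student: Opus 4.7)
Plan: The first step is to apply Lemma \ref{lem:rationalcenter}. Since $\Rational(\zeta_{39}) \supset \Rational(\zeta_{13}) \supset \Rational(\sqrt{13}) = \Rational(D_0)$, the category $\cH_0^p(\Rational(\zeta_{39}))$ is a (possibly incomplete) rational form of $\cH_0^p(\Complex)$ as a fusion category, so $Z(\cH_0^p(\Rational(\zeta_{39})))$ is a rational form of $Z(\cH_0^p(\Complex))$ as a braided fusion category; the ribbon structure on the center is intrinsic to the tensor structure so it descends too. The task reduces to showing this rational form is complete: each of the twelve simples $1, \pi_1, \pi_2, \mu_1, \ldots, \mu_6, \sigma_0, \sigma_1, \sigma_2$ of $Z(\cH_0^p(\Complex))$ must appear as a split simple over $\Rational(\zeta_{39})$.

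The key tool is the induction functor $I \colon \cH_0^p \to Z(\cH_0^p)$, which commutes with base change (this is part of the content of Lemma \ref{lem:rationalcenter}). By Figure \ref{fig:izumi-induction}, drawn from \cite{MR1832764}, the object $I(\JW{2})$ decomposes over $\Complex$ with exactly one copy of each non-unit simple of $Z(\cH_0^p(\Complex))$. Hence $\End{I(\JW{2})}_\Complex$ is commutative semisimple, so $\End{I(\JW{2})}_{\Rational(\zeta_{39})}$ is commutative semisimple as well, necessarily a product $\prod_i L_i$ of field extensions of $\Rational(\zeta_{39})$ with $\sum_i [L_i : \Rational(\zeta_{39})] = 11$. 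Each factor $L_i$ corresponds to a simple summand $V_i$ of $I(\JW{2})$ over $\Rational(\zeta_{39})$ whose base change to $\Complex$ is a Galois orbit consisting of $[L_i : \Rational(\zeta_{39})]$ of the eleven non-unit simples (each appearing with multiplicity one).

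To conclude each $L_i = \Rational(\zeta_{39})$, I would use the ribbon twist $\theta$, viewed as an element of $\End{I(\JW{2})}_{\Rational(\zeta_{39})}$. Its eigenvalues on the eleven simple summands are entries of the $T$-matrix computed in \cite{MR1832764}, and by hypothesis these lie in $\Rational(\zeta_{39})$. Provided these eleven eigenvalues are distinct --- a fact that can be read off Izumi's tables --- the spectral projections of $\theta$ furnish eleven orthogonal idempotents in $\End{I(\JW{2})}_{\Rational(\zeta_{39})}$, forcing each $L_i = \Rational(\zeta_{39})$ and splitting off each non-unit simple with endomorphism algebra $\Rational(\zeta_{39})$. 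The remaining simple, the unit, is split trivially. Should some twist eigenvalues happen to coincide, the refinement is to combine $\theta$ with additional Galois-invariant endomorphisms of $I(\JW{2})$ constructed from $S$-matrix entries (which again lie in $\Rational(\zeta_{39})$); modularity of $Z(\cH_0^p(\Complex))$ guarantees that the full $(S,T)$ data distinguishes all simples.

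The main obstacle is the concrete verification --- a finite inspection of Izumi's tables in \cite{MR1832764} --- that the modular data really does separate the eleven simples appearing in $I(\JW{2})$. This is exactly the sort of indirect computation the authors allude to when they describe the proof as ``somewhat unsatisfying''; a fully direct proof would instead write down $\Rational(\zeta_{39})$-valued half-braidings on each induced object.
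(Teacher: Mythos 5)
Your proposal follows the same overall strategy as the paper's proof: reduce to completeness via the center-of-a-rational-form lemma, induct $\JW{2}$, and use the ribbon twist $\theta_{I(\JW 2)}$ to manufacture idempotents in $\End{I(\JW 2)}$ over $\Rational(\zeta_{39})$. However, there is a concrete gap in your fallback clause. The eleven $T$-eigenvalues are \emph{not} distinct: reading the $T$-matrix from \cite{MR1832764}, the objects $\pi_1$, $\pi_2$, $\sigma_0$ all have twist $1$ (as does the unit, which doesn't appear in $I(\JW 2)$). So the spectral projections of $\theta$ split off the six $\mu_i$ and $\sigma_1,\sigma_2$ individually, but lump $\pi_1\oplus\pi_2\oplus\sigma_0$ together, and further refinement is unavoidable rather than a hypothetical contingency.

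Your proposed refinement --- ``combine $\theta$ with $S$-matrix endomorphisms; modularity guarantees $(S,T)$ separates all simples'' --- is where the argument thins out. Modularity gives that the rows of $S$ are pairwise distinct, but to realize the separating operator over $\Rational(\zeta_{39})$ you need a simple $V$ \emph{already known to be defined over} $\Rational(\zeta_{39})$ with $S_{\pi_2,V}\neq S_{\sigma_0,V}$, and a priori the separating column could be $\pi_2$ or $\sigma_0$ themselves. The paper uses the $S$-matrix operator $F$ built from $\sigma_1$ (known over $\Rational(\zeta_{39})$ at that point) to split off $\pi_1$, obtaining $\frac{5+\sqrt{13}}{18}(\id+F)$ as a projection onto $\pi_1$, but $F$ does not distinguish $\pi_2$ from $\sigma_0$. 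For the final split the paper abandons modular data entirely and uses a different idea: $I(1)\cong 1\oplus\pi_1\oplus 2\pi_2$ contains $\pi_2$ but not $\sigma_0$, so a nonzero morphism $I(1)\to\pi_2\oplus\sigma_0$ defined over $\Rational(\zeta_{39})$ (which exists since the hom space base-changes from $\Rational(\zeta_{39})$) has image $\pi_2$, forcing $\pi_2\oplus\sigma_0$ to decompose. This use of a \emph{second} induced object $I(1)$ is the extra idea your sketch is missing; without it (or a verified $S$-matrix column argument), the proof as you've outlined it does not close.
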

\begin{proof}
%We want to show that the object $I(\JW{2})$ has the same decomposition over $\Rational(\zeta_{39})$.  
Since $\JW{2}$ is an object in $\cH_0^p(\Rational(\zeta_{39}))$, we have that $I(\JW{2})$ is an object in $Z(\cH_0^p(\Rational(\zeta_{39})))$.  We build explicit projections  on to many of its summands.
Let $\theta_{I(\JW{2})}$ be the ribbon element acting on $I(\JW{2})$.  Over $\Complex$, the ribbon element $\theta_{I(\JW{2})}$ acts on each simple object by the corresponding entry in the $T$-matrix which is diagonal with entries $$(T_{jj}) = (1,1,1, \zeta_{39}^6, \zeta_{39}^{-6}, \zeta_{39}^{15},  \zeta_{39}^{-15}, \zeta_{39}^{18}, \zeta_{39}^{-18}, 1, \zeta_{39}^{13}, \zeta_{39}^{-13}).$$

Consider $\frac{1}{39}\sum_i \zeta_{39}^{m i} \theta_{I(\JW{2})}^i$ for different values of $m$. These give projections onto the eigenspaces of the $T$-matrix Using the values of the $T$-matrix it follows that for these give projections onto each of $\mu_i$, $\sigma_1$, and $\sigma_2$.  Hence all of those simples are defined over $\Rational(\zeta_{39})$.

The sum $\sum_i \theta_{I(\JW{2})}^i$ gives a projection onto $\pi_1 \oplus \pi_2 \oplus \sigma_0$.  Let $F$ be a single strand labelled by $\pi_1 \oplus \pi_2 \oplus \sigma_0$ with a ring around it labelled by $\sigma_1$.  Looking at the corresponding entries in the $S$-matrix (described in \cite{MR2468378}) we see that $\frac{5 + \sqrt{13}}{18} (\id + F)$ 
 gives a projection onto $\pi_1$ while its complement gives a projection onto $\pi_2 \oplus \sigma_0$.

Consider $I(1) \cong 1\oplus \pi_1 \oplus 2 \pi_2$.  Notice that here $\pi_2$ is a summand while $\sigma_0$ is not.  In particular, there is a map over $\Complex$ from $I(1)$ to $\pi_2 \oplus \sigma_0$, and hence over $\Rational(\zeta_{39})$ there also must be such a map.  Hence by semisimplicity, we must have $\sigma_0$ and $\pi_2$ are objects over $\Rational(\zeta_{39})$.
\end{proof}

\section{Galois conjugates}
\label{sec:galois}
%!TEX root = ../article.tex

The field $$K = \Rational(\lambda_0) = \Rational\left(i\sqrt{\frac{-1+\sqrt{13}}{2}}\right)$$ is a degree $4$ non-Galois extension of $\Rational$.  Its Galois closure $L$ has degree $8$ and has Galois group the dihedral group with $8$ elements which we think of as the automorphisms of a fixed square.  The field $K$ is the fixed points of the subgroup generated by a reflection in one of the sides in the square.  In particular, the orbit of $K$ under the Galois group $\text{Gal}(L/\Rational)$ consists of two fields, $K$ and $$K' = \Rational\left(i\sqrt{\frac{-1-\sqrt{13}}{2}}\right).$$  Each of $K$ and $K'$ is fixed pointwise by two elements of $\text{Gal}(L/\Rational)$ and each has a single nontrivial automorphism over $\Rational$.  The field automorphism of $K/\Rational$ acts on $\cH_0^p$ by the diagram automorphism which interchanges $P$ and $Q$.  In particular, using the Galois action we can only construct a single new fusion category which is defined over $K'$.  This category is non-unitary and the dimensions of the objects are given by replacing $\sqrt{13}$ with $-\sqrt{13}$ everywhere.

The story is similar for $ \Rational(\lambda_1)$ and $\cH_1^p$. Again, there is a single nontrivial automorphism of the field which acts on the fusion category by diagram automorphism.  There are two Galois conjugate fusion categories which are non-unitary and whose dimensions are given by the action of $\text{Gal}(\Rational(d_1^2)/\Rational)$ on the old dimensions.

% ----------------------------------------------------------------
\hfuzz5pt 
\newcommand{\urlprefix}{}
\bibliographystyle{gtart}
%Included for winedt:
%input "bibliography/bibliography.bib"
\bibliography{bibliography/bibliography}
% ----------------------------------------------------------------

This paper is available online at \arxiv{1002.0168}, and at
\url{http://tqft.net/noncyclotomic}.

% A GTART necessity:
% \Addresses
% ----------------------------------------------------------------
\end{document}